\newtheorem{theorem}{Theorem}
\begin{document}

\title{Generating maps on surfaces}

\author{Thom Sulanke}
\address{Department of Physics, Indiana University, Bloomington, Indiana 47405}
   \email{tsulanke@indiana.edu}

\begin{abstract}
\normalsize

We describe procedures for generating all $2$-cell embedded simple graphs
with up to a fixed number of vertices on a given surface.
We also modify these procedures to generate closed $2$-cell embeddings and
polyhedral embeddings.
We give results of computer implementations of these procedures for seven
surfaces:
the sphere, the torus, the double torus, the projective plane, the Klein
bottle, the triple cross surface, and the quadruple cross surface.
\keywords{Map \and
Irreducible triangulation \and
Open 2-cell embedding \and
Closed 2-cell embedding \and
Polyhedral embedding }

\end{abstract}
\maketitle

\section{Introduction}
\label{introduction}

A {\em map} is a simple graph embedded in a surface such that every face is 
simply connected.
A {\em triangulation} is a map in which every face has three edges.
Section~\ref{definitions} contains more detailed definitions.
In Sections~\ref{recursive}, \ref{addedges}, and \ref{contractedges}
we describe the operations or local deformations which we apply to 
triangulations and maps.

The generation of triangulations and maps on surfaces such as the projective 
plane, the torus, and the Klein bottle has similarities
to the generation of triangulations and maps on the sphere.
There are also interesting differences.

Brinkmann and McKay provide procedures for generating triangulations and 
\linebreak
maps on the sphere \cite{MR2357364,plantripaper}.
They implement these
procedures in a computer program {\em plantri} \cite{plantri}.
We extend these techniques to non-spherical surfaces.

We know from the work of Steinitz \cite{StRa} that we can generate all the 
triangulations of the sphere with $n > 4$ vertices by applying 
the vertex splitting operation (Figure~\ref{split} 
and Section~\ref{contractedges}) to the 
triangulations of the sphere with $n-1$ vertices.
The single initial triangulation for this recursive process is $K_4$ 
embedded in the sphere, the (boundary of the) tetrahedron.
The inverse of the vertex splitting operation is the operation of 
edge contraction.
To assure that the repeated application of the vertex splitting operation 
generates
all triangulations it is required that the edge contraction operation can 
always be performed on any triangulation other than $K_4$
and that the result of this edge contraction operation is
also a triangulation.
For a triangulation of the sphere other than $K_4$ it is indeed 
always possible 
to find an edge for which the edge contraction operation can be applied
\cite{StRa}.

\begin{figure}
\centering
\psset{unit=.009\textwidth}
\parbox{.39\textwidth}{%
\centering
\ttfamily
\begin{pspicture}(-5,-5)(35,20)
\rput{180}(5,10){\pspolygon[linewidth=.3pt](.5,0)(5,.8)(5,-.8)}
\rput{200}(5,10){\psline[linewidth=.3pt](.5,0)(5,0)}
\psline[linewidth=.3pt]{*-*}(5,10)(15,10)
\psline[linewidth=.3pt]{*-*}(15,10)(25,10)
\rput{0}(25,10){\pspolygon[linewidth=.3pt](.5,0)(5,.8)(5,-.8)}
\rput{-20}(25,10){\psline[linewidth=.3pt](.5,0)(5,0)}
\rput{90}(15,10){\pspolygon[linewidth=.3pt](.5,0)(5,.8)(5,-.8)}
\rput{270}(15,10){\pspolygon[linewidth=.3pt](.5,0)(5,.8)(5,-.8)}
\end{pspicture}}
\parbox{.19\textwidth}{%
\centering
\ttfamily
\begin{pspicture}(0,0)(15,20)
\psline[linewidth=1pt]{->}(5,10)(10,10)
\rput(7.5,12){{E}}
\end{pspicture}}
\parbox{.39\textwidth}{%
\centering
\ttfamily
\begin{pspicture}(-5,-5)(35,20)
\rput{180}(5,10){\pspolygon[linewidth=.3pt](.5,0)(5,.8)(5,-.8)}
\rput{200}(5,10){\psline[linewidth=.3pt](.5,0)(5,0)}
\psline[linewidth=.3pt]{*-*}(5,10)(15,16)
\psline[linewidth=.3pt]{*-*}(15,16)(25,10)
\psline[linewidth=.3pt]{*-*}(25,10)(15,4)
\psline[linewidth=.3pt]{*-*}(15,4)(5,10)
\psline[linewidth=.3pt]{*-*}(15,4)(15,16)
\rput{0}(25,10){\pspolygon[linewidth=.3pt](.5,0)(5,.8)(5,-.8)}
\rput{-20}(25,10){\psline[linewidth=.3pt](.5,0)(5,0)}
\rput{90}(15,16){\pspolygon[linewidth=.3pt](.5,0)(5,.8)(5,-.8)}
\rput{270}(15,4){\pspolygon[linewidth=.3pt](.5,0)(5,.8)(5,-.8)}
\end{pspicture}}

\caption{Expansion for splitting a vertex}
\label{split}
\end{figure}
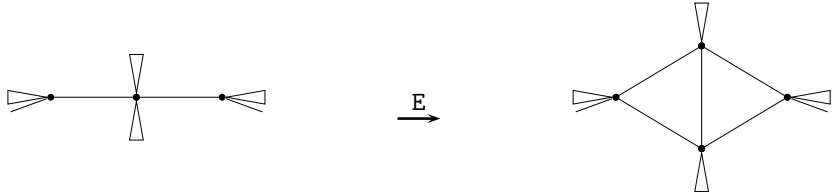

Triangulations of non-spherical surfaces can be generated in the same way.
However, for any surface other than the sphere 
there are many but a finite number of triangulations which do not have
any contractible edges \cite{MR1021367}.
An edge is not contractible if an attempt to apply the edge contraction 
operation would produce multiple edges.
We define a triangulation with no contractible edges to be an 
{\em irreducible triangulation}.
The initial triangulations used for 
generating all triangulations of a surface are the 
irreducible triangulations of this surface.
In Section~\ref{findirrtri} we provide a method for producing 
the class of irreducible triangulations of a surface.

We next turn to the generation of maps which might not be triangulations.
Maps on the sphere can be obtained from the triangulations of the sphere by
the operation of edge removal (Figure~\ref{removecorner} 
and Section~\ref{addedges}).
For the repeated application of the operation of edge removal to generate 
all the maps on the sphere, 
it must always be possible to perform the inverse operation of adding 
an edge to a map which is not a
triangulation and, in the process, obtain another map.
The edge must be added in such a way that does not create multiple edges.
By applying the Jordan curve theorem we can show that
for the sphere the operation of adding an edge is always possible for a map
which is not a triangulation.

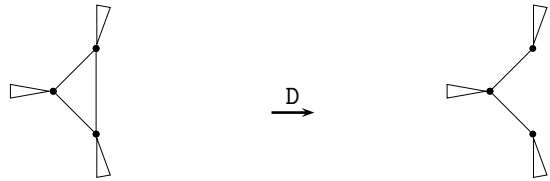
\begin{figure}
\centering
\psset{unit=.009\textwidth}
\parbox{.39\textwidth}{%
\centering
\ttfamily
\begin{pspicture}(-5,-5)(35,20)
\psline[linewidth=.3pt]{*-*}(25,5)(25,15)
\psline[linewidth=.3pt]{*-*}(20,10)(25,5)
\psline[linewidth=.3pt]{*-*}(20,10)(25,15)
\rput{80}(25,15){\pspolygon[linewidth=.3pt](.5,0)(5,.8)(5,-.8)}
\rput{180}(20,10){\pspolygon[linewidth=.3pt](.5,0)(5,.8)(5,-.8)}
\rput{280}(25,5){\pspolygon[linewidth=.3pt](.5,0)(5,.8)(5,-.8)}
\end{pspicture}}
\parbox{.19\textwidth}{%
\centering
\ttfamily
\begin{pspicture}(0,0)(15,20)
\psline[linewidth=1pt]{->}(5,10)(10,10)
\rput(7.5,12){{D}}
\end{pspicture}}
\parbox{.39\textwidth}{%
\centering
\ttfamily
\begin{pspicture}(-5,-5)(35,20)
\psline[linewidth=.3pt]{*-*}(5,10)(10,5)
\psline[linewidth=.3pt]{*-*}(5,10)(10,15)
\rput{80}(10,15){\pspolygon[linewidth=.3pt](.5,0)(5,.8)(5,-.8)}
\rput{180}(5,10){\pspolygon[linewidth=.3pt](.5,0)(5,.8)(5,-.8)}
\rput{280}(10,5){\pspolygon[linewidth=.3pt](.5,0)(5,.8)(5,-.8)}
\end{pspicture}}
\caption{Expansion for removing edge}
\label{removecorner}
\end{figure}

We would like to generate maps on other surfaces in a similar way.
However, there are maps which are not triangulations for which it is not 
possible to add an edge without producing multiple edges.
In Section~\ref{contractedges} we introduce a class of maps, which we called 
irreducible maps,
which are analogous to irreducible triangulations for the purpose of 
generating maps.

The generation of the maps with $n$ vertices of a fixed surface consists of
four steps:

\begin{enumerate}
\item Generate the {\em irreducible triangulations} of the surface 
(Section~\ref{findirrtri}).
\item Generate the {\em irreducible maps} of the surface from the 
irreducible triangulations by removing vertices (Section~\ref{findirrmaps}).
\item Split vertices (E-expansions) of the irreducible maps 
to obtain {\em face irreducible maps} with $n$ vertices (Figure~\ref{split} 
and Section~\ref{contractedges}).
\item Remove edges (D-expansions) of the face irreducible maps 
while the maps remain {\em $2$-cell embeddings} (Figure~\ref{removecorner} 
and Section~\ref{addedges}).
\end{enumerate}

Before describing these steps in 
Sections~\ref{addedges}--\ref{findirrtri} we give definitions
related to graphs in Section~\ref{definitions} and 
we provide notation used when describing operations for generating maps in
Section~\ref{recursive}.
The computer program {\em surftri} \cite{surftri} is an implementation of the 
techniques for generating triangulations and maps
described in this paper.
Information on how this was done is contained in Section~\ref{implement}.
In Section~\ref{facewidth} we discuss how the steps for generating maps can be
modified to generate closed $2$-cell embeddings and polyhedral embeddings.
We display the irreducible maps on the projective plane and torus in 
Section~\ref{display}.

\section{Definitions}
\label{definitions}

A {\em surface} is a two-dimensional compact manifold.
We denote the orientable surface with genus $g$, the sphere with $g$ handles
attached, as $S_g$ and the 
nonorientable surface with genus $g$, the sphere with $g$ crosscaps attached,
as $N_g$.

We consider only simple graphs which are graphs with no loops and no multiple 
edges.
Let $G$ be a connected graph embedded on a surface $S$.
A {\em face} of $G$ is 
a connected component of the complement of $G$ in $S$.
The graph $G$ is a {\em map} (or {\em open $2$-cell embedding}) on $S$ if 
every face of $G$ is homeomorphic to an open disk.
If the three edges $v_1v_2$, $v_2v_3$, and $v_3v_1$ are contained in the 
map $G$
then the union of these three edges is a {\em $3$-cycle} of $G$ denoted as
$v_1v_2v_3$.
A map $G$ on a surface $S$ is a {\em triangulation} of $S$ if the boundary of 
every face of $G$ is a $3$-cycle and the map is not a single $3$-cycle embedded
on the sphere.

Let $F$ be a face of $G$ with the boundary edges, in order, 
$v_1 v_2$, $v_2 v_3$, \dots, $v_m v_1$.  
The face $F$ is denoted by the list of
vertices $v_1 v_2 v_3 \dots v_m$.  These vertices do not need to be distinct.
We call $F$ an {\em m-face} and we say that $F$ has {\em size} $m$.
If $F$ has size $m \ge 4$ then~$F$ is defined to be a {\em large face}.
The subscripts of the vertices of a face are modulo $m$.
We say we {\em triangulate} a large face $F$ when we add vertices and edges 
to the interior of $F$ in such a way that all the new faces formed are 
$3$-faces.

\section{Recursive generation}
\label{recursive}

The basic ``isomorph-free'' generation technique that is used for the 
Steps~(1)--(4) is described in detail in 
\cite{MR2357364,plantripaper,MR1606516}.
For each generation process we specify the class~$\mathcal{C}$ which is the 
class of maps to be generated,
the initial class $\mathcal{C}_0 \subseteq \mathcal{C}$ 
which is the class of maps 
from which the maps in $\mathcal{C}$ are generated, 
and $F$ an expansion operation.
The expansion operation $F$ is a function from $\mathcal{C}$ into 
the set of subclasses of $\mathcal{C}$.
We say $(\mathcal{C}_0;F)$ generates $\mathcal{C}$ if for each 
$G \in \mathcal{C}$ there is a sequence $G_0$, $G_1$, \dots, $G_m = G$ 
such that $G_0 \in \mathcal{C}_0$ and for every $i$, $1 \le i \le m$ we have 
$G_i \in F(G_{i-1})$.
We call the expansion operation $F$ the {\em F-expansion}.
The inverse of the expansion operation $F$ is the {\em F-reduction}.

Figures~\ref{split} and \ref{removecorner} represent expansion operations
which we use to generate maps.
The left side of each figure shows a part of the embedded graph before the
expansion operation.
The right side shows the same part of the graph after the expansion operation.
Each part of the graphs shown
is contained in a simply connected component of the surface.
The full edges which are shown
are required to be a part of the map being modified.
The shorter half edges (in Figure~\ref{split}) are other unchanged edges of 
the map.  
The small flattened triangles (in Figures~\ref{split} and \ref{removecorner})
represent the location of zero or more other unchanged edges.
The expansion operation replaces the subgraph on the left with the subgraph on
the right.  The reduction operation replaces the right subgraph
with the left subgraph.
Since the part of the graph being modified is contained in a simply connected
component the surface remains the same.

\section{Removing edges in corners of large faces}
\label{addedges}

We examine the steps of the overall map generation process in reverse order 
to help clarify the choice of operations and initial classes which we use.

In Step 4
we generate $\mathcal{M}_2$, the class of all maps for a surface $S$.
We consider an arbitrary map of the type being generated and describe
the reduction operation, the D-reduction.
The D-expansion is the inverse of the D-reduction.
Let $\mathcal{M}_1 \subseteq \mathcal{M}_2$ be the class of 
those maps on $S$ for which the D-reduction is not possible. 
We characterize $\mathcal{M}_1$ and
show that $(\mathcal{M}_1;D)$ generates $\mathcal{M}_2$.

Let $G$ be a map on a surface $S$.
We continue to call the map $G$ even as it is modified by the D-reduction.
The D-expansion which is shown in Figure~\ref{removecorner}
is the removal of an edge from a $3$-face.
It can only be applied if the map has a $3$-face.
The D-reduction is the addition of an edge in the ``corner'' of a large face.
We only apply the D-reduction if the edge to be added does not already exist 
in the map.

Let $F = v_1 v_2 \dots v_m$  be a large face of $G$
and let $v_i$ be a vertex on $F$.
If  $v_i = v_{i+2}$ then $v_{i+1}$ has
degree $1$ and is not adjacent to $v_{i+3}$. The edge $v_{i+1} v_{i+3}$
can be added in the interior of~$F$
dividing $F$ into a $3$-face and a face of size $m{-}1$.
Adding this edge reduces the number of vertices of degree $1$.  
Repeated addition of edges of this type results in no faces having a vertex of 
degree $1$.

We can now assume that the three vertices 
$v_i, v_{i+1}, v_{i+2}$ on $F$ are distinct.
If $v_i$ is not adjacent to $v_{i+2}$ then again the edge $v_i v_{i+2}$
can be added in the interior of~$F$ 
dividing $F$ into a $3$-face and a face of size $m{-}1$.

The addition of an edge in this way is the D-reduction.
Even though a $D$-reduction adds an edge to the map it simplifies the 
map by making it more like a triangulation by increasing the number of $3$-faces.
The application of a $D$-reduction reduces the size of one large face and 
does not change the size of the other large faces.
So eventually no more $D$-reductions are possible.

If $S$ is the sphere then a $D$-reduction can always be applied in a
large face.
The initial class $\mathcal{M}_1$ of maps on the sphere for which 
the $D$-reduction 
is not possible is the class of triangulations of the sphere.

If $S$ is a surface other than the sphere then it is possible to
have a large face in which it is not possible to use a $D$-reduction.
For example, $P^4$, in Figure~\ref{mapsproj} shows $K_4$ 
embedded in the projective plane.  
Opposite points on the hexagon have been identified.
The map has three $4$-faces.
None of the faces are $3$-faces and no edges can be added since the graph is 
complete.

A large face $F = v_1 v_2 \dots v_m$ of a map is 
an {\em irreducible face} if for every $i$, $1 \le i \le m$,
the vertices $v_i$ and $v_{i+2}$ are adjacent.  
A map is {\em face irreducible} if every face is an irreducible face or 
a $3$-face.
Trivially, every triangulation is a face irreducible map.

So the initial class $\mathcal{M}_1$ of maps on $S$ 
for which the $D$-reduction 
is not possible is the class of face irreducible maps on $S$.

\section{Splitting vertices}
\label{contractedges}

We now consider Step 3 of the map generation process in which we generate the 
face irreducible maps.
The class of maps to be generated is $\mathcal{M}_1$ which is used as the 
initial class in the previous section.
The $E$-expansion shown in Figure~\ref{split} is used.
Below we specify an initial class $\mathcal{M}_0$ such that 
$(\mathcal{M}_0;E)$ generates $\mathcal{M}_1$.

Let $G \in \mathcal{M}_1$ be a face irreducible map on the surface $S$. 
The $E$-reduction, {\em edge contraction}, is the inverse of the 
$E$-expansion.
The $E$-reduction is applied only if the faces on both sides of the edge 
being contracted are $3$-faces.
Also the $E$-reduction is performed only if the resulting graph is still simple.
Let $v_1$ and~$v_2$ be the vertices of the edge to be contracted and 
let $u_1 v_1 v_2$ and $u_2 v_1 v_2$ be the $3$-faces on either side of $v_1 v_2$.
The two ends of the contracted edge, $v_1$ and $v_2$, must not both be 
adjacent to any vertices other than $u_1$ and $u_2$.  Otherwise, multiple edges
would be produced when $v_1 v_2$ is contracted.
An edge is {\em contractible} if it is on exactly two $3$-cycles both of which 
are $3$-faces.
To apply the edge contraction operation to an edge the edge must be 
contractible and the map must not be $K_4$ embedded in the sphere.
An edge is {\em noncontractible} if there is at least one $3$-cycle 
containing the edge which is not a $3$-face.
An edge is {\em essentially noncontractible} if at least one $3$-cycle 
containing the edge is an essential $3$-cycle on $S$.

\begin{theorem}
\label{IrreducibleFaces}

Every edge on the boundary of an irreducible face is 
essentially noncontractible.

\end{theorem}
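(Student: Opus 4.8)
The plan is to fix a boundary edge $e=v_iv_{i+1}$ of the irreducible face $F$ and to produce an essential $3$-cycle through it. Because $F$ is irreducible, the chords $v_{i-1}v_{i+1}$ and $v_iv_{i+2}$ are present, so $e$ lies on the two $3$-cycles $C^-=v_{i-1}v_iv_{i+1}$ and $C^+=v_iv_{i+1}v_{i+2}$. I would argue by contradiction: if neither is essential, then both bound disks $\Delta^-$ and $\Delta^+$ in $S$, and I will show these two disks cannot coexist. We may assume $S$ is not the sphere, since on the sphere every large face admits a $D$-reduction and hence no irreducible face exists; we may also assume that $v_{i-1},v_i,v_{i+1},v_{i+2}$ are distinct, which is guaranteed by $m\ge 4$ together with the removal of degree-$1$ vertices discussed above.

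The first step is to locate the vertices of $\partial F$ relative to each disk. Since $e$ lies on each bounding triangle, $\Delta^\pm$ sits on one side of $e$; and since $\operatorname{int}F$ is connected and disjoint from the bounding triangle, $F$ lies entirely inside a disk when that disk is on the $F$-side of $e$ and entirely outside otherwise. The only vertices of $\partial F$ that can lie on $C^+$ are $v_i,v_{i+1},v_{i+2}$, so, using $m\ge 4$, the vertex $v_{i-1}$ lies in $\operatorname{int}\Delta^+$ when $\Delta^+$ is on the $F$-side and strictly outside $\overline{\Delta^+}$ otherwise; symmetrically, $v_{i+2}$ lies in $\operatorname{int}\Delta^-$ or strictly outside $\overline{\Delta^-}$ according to the side of $\Delta^-$.

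The second step is to derive the incompatibility. The triangles $C^-$ and $C^+$ share the arc $e$, and their remaining arcs---one through $v_{i-1}$, the other through $v_{i+2}$---meet only at $v_i$ and $v_{i+1}$, because edges of an embedded graph meet only at shared endpoints and $v_{i-1}\neq v_{i+2}$. Hence if the two disks lie on the same side of $e$ they are nested, and if they lie on opposite sides their interiors are separated by $e$. In the nested case the first step places a vertex of one triangle on the wrong side of the other disk (a boundary vertex of the outer triangle is pushed into its interior when both disks contain $F$, or a boundary vertex of the inner triangle is placed outside the outer disk when both lie on the far side), contradicting the nesting. In the opposite-side case the two non-$e$ edges of the far triangle are forced to lie inside the near disk (the one containing $F$), so that triangle bounds a disk on the $F$-side of $e$, contrary to its assumed far side---impossible on a non-spherical surface, where a contractible cycle bounds a disk on exactly one side.

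Thus $C^-$ and $C^+$ cannot both bound disks, so at least one is essential; being a $3$-cycle through $e$, it witnesses that $e$ is essentially noncontractible, and since $e$ was arbitrary the theorem follows. I expect the main obstacle to be exactly the topological case analysis of this second step: pinning down, via the Jordan curve theorem, which side of $e$ each disk occupies and verifying that the two embedded triangles nest correctly, while keeping the degenerate configurations (repeated vertices along $\partial F$) under control through the standing assumptions $m\ge 4$ and the absence of degree-$1$ vertices.
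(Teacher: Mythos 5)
Your strategy---play the two corner triangles $C^-=v_{i-1}v_iv_{i+1}$ and $C^+=v_iv_{i+1}v_{i+2}$ against each other and show their bounding disks cannot coexist---is genuinely different from the paper's, but it rests on a standing assumption that is false. You claim that $v_{i-1}\neq v_{i+2}$ is ``guaranteed by $m\ge 4$ together with the removal of degree-$1$ vertices.'' Those considerations only give $v_j\neq v_{j+2}$; nothing prevents $v_j=v_{j+3}$ on the boundary walk of an irreducible face. The map $P^3$ of Figure~\ref{mapsproj}, a $3$-cycle embedded in the projective plane whose unique face is the irreducible hexagon $v_1v_2v_3v_1v_2v_3$, has $v_{i-1}=v_{i+2}$ at every position $i$. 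There $C^-$ and $C^+$ are the \emph{same} $3$-cycle, so the ``two disks cannot coexist'' scheme has nothing to work with, and what remains to be shown is precisely that this single triangle is essential---which your argument never addresses. The same example defeats your Step 1: the boundary walk traverses $e$ twice, so $F$ lies on \emph{both} sides of $e$ and the dichotomy ``$F$-side versus far side,'' on which your entire case analysis hangs, is not even defined. This is a genuine gap, not a technicality to be absorbed into the ``topological case analysis'' you flag at the end.

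The paper avoids both problems by arguing about the single triangle $C_{F,i}=v_iv_{i+1}v_{i+2}$: assuming it bounds a disk $D$, it first notes that the irreducible large face $F$ cannot lie in the planar piece $D$, then uses irreducibility at $v_{i+1}$ and $v_{i+2}$ to force $v_{i+3}=v_i$ and $v_{i-1}=v_{i+2}$ (the very degeneracy you assumed away), so that the edge $v_iv_{i+2}$ occurs twice on $F$ with $F$ on both of its sides---contradicting that $D$ occupies one side of that edge. Note also that the paper thereby proves the stronger fact that \emph{every} corner $3$-cycle of an irreducible face is essential, which is what is actually invoked later (in the proof of Theorem~\ref{TriangulateIrrFace} and in Section~\ref{facewidth}); your conclusion that at least one of the two corner triangles through $e$ is essential would not suffice there even if repaired. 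For what it is worth, in the nondegenerate situation ($v_{i-1}\neq v_{i+2}$ and $F$ on only one side of $e$) your Step 2 dichotomy can be made to work, though the claim that same-side disks must be nested is asserted rather than proved.
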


\begin{proof}
Let $F = v_1 v_2 \dots v_m$ be an irreducible face and let $v_i v_{i+1}$ be 
an edge on the boundary of $F$.
Define $C_{F,i}$ to be the $3$-cycle $v_i v_{i+1} v_{i+2}$.
Assume $C_{F,i}$ is not essential, i.e. 
the interior of $C_{F,i}$ is simply connected.
Let $D$ be the disk consisting of $C_{F,i}$ and its interior.
The path $v_i v_{i+1} v_{i+2}$ is on the boundary of both $F$ and $D$.
The graph consisting of the vertices and edges in $D$ is a map on the sphere so
$F$ cannot be in $D$.
The vertex $v_{i+3}$ must be adjacent to $v_{i+1}$ but the edge 
$v_{i+3} v_{i+1}$ cannot be in the interior of $F$ or in the interior of $D$.
So $v_{i+3} = v_i$.
Similarly, $v_{i+2} = v_{i-1}$.
Thus the edge $e = v_{i+2} v_{i+3} = v_{i-1} v_i = v_{i+2} v_i$ 
must occur twice on $F$ and
$F$ is on both sides of $e$.
This is impossible since $C_{F,i}$ must be on one side of~$e$. 
\end{proof}

Contracting edges of $G$ does not change the surface $S$ 
in which $G$ is embedded so $C_{F,i}$ in the proof of
Theorem~\ref{IrreducibleFaces} remains essential and cannot 
become a $3$-face when edges are contracted.
So an irreducible face of $G$ remains an irreducible face of $G$ with the same
size when edges of $G$ are contracted.
As edges of $G$ are contracted the number of irreducible faces and the their 
sizes remain unchanged.

We apply the $E$-reduction while contractible edges remain in $G$.
Each application of the $E$-reduction reduces the number of vertices of $G$.
So after a finite number of $E$-reductions there are no contractible edges.

We define a map $G$ as an {\em irreducible map} if $G$ is 
face irreducible and no edge in~$G$ is contractible.
The following property of irreducible maps is a generalization of a similar
property of irreducible triangulations.

\begin{theorem}
\label{IrreducibleMap}

Every edge of an irreducible map is essentially noncontractible.

\end{theorem}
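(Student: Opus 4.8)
The plan is to treat an arbitrary edge $e = v_1 v_2$ of the irreducible map $G$ and split according to the two faces meeting $e$. Since $G$ is face irreducible, each of these faces is either a $3$-face or an irreducible face. If one of them is an irreducible face, then $e$ lies on the boundary of an irreducible face, and Theorem~\ref{IrreducibleFaces} immediately shows that $e$ is essentially noncontractible. So the substantive case is when both faces at $e$ are $3$-faces; write them as $u_1 v_1 v_2$ and $u_2 v_1 v_2$.

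In that case I would first extract a non-facial $3$-cycle through $e$. The two $3$-faces give two $3$-cycles through $e$ that are themselves $3$-faces. Since $G$ is irreducible, $e$ is not contractible, so by the definition of contractibility $e$ cannot be on exactly two $3$-cycles; hence $v_1$ and $v_2$ have a further common neighbour $w \notin \{u_1,u_2\}$, and $C = w v_1 v_2$ is a $3$-cycle through $e$ that is not a $3$-face. It then suffices to exhibit an essential $3$-cycle through $e$, and I would argue by contradiction, assuming every $3$-cycle through $e$ is inessential. In particular $C$ is inessential and bounds a disk $D$.

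The key structural step is to show that $D$ is a triangulated disk. Every face of $G$ lies entirely inside or entirely outside $D$, because $\partial D = C$ is a subgraph of $G$. By face irreducibility a face inside $D$ is a $3$-face or an irreducible face; but an irreducible face inside $D$ would, by the argument in the proof of Theorem~\ref{IrreducibleFaces}, carry an essential $3$-cycle $C_{F,j}$ on its boundary, and a $3$-cycle contained in the disk $D$ cannot be essential. Hence every face inside $D$ is a $3$-face, so $D$ is triangulated. Moreover $D$ has at least one interior vertex: the apex of whichever $3$-face lies on the $D$-side of $e$ is strictly inside $D$, being distinct from $v_1$, $v_2$, and $w$. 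Call this interior vertex $u_1$.

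Finally I would reach a contradiction with the irreducibility of $G$ by an innermost argument. Among all inessential non-facial $3$-cycles through edges that violate the theorem, I would choose one, $C$, bounding a disk $D$ with the fewest faces. Inside the triangulated disk $D$, the interior edge $v_1 u_1$ has both of its faces among the $3$-faces of $D$, and all of its incident $3$-cycles lie in $\overline D$ (every neighbour of the interior vertex $u_1$ lies in $\overline D$), so each such $3$-cycle is inessential; thus $v_1 u_1$ is again a violating edge with both faces $3$-faces. If $v_1 u_1$ is contractible in $G$ we already contradict irreducibility; otherwise, as above it carries a non-facial inessential $3$-cycle bounding a disk strictly inside $D$, contradicting minimality. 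I expect the main obstacle to be exactly this last piece of topological bookkeeping — verifying that the new $3$-cycle, which passes through the interior vertex $u_1$ and hence differs from $\partial D$, bounds a disk strictly smaller than $D$, and that contractibility computed inside $D$ agrees with contractibility in $G$ because all relevant faces and common neighbours remain within $\overline D$. The base case, a single interior vertex, is the tetrahedral disk, where the interior edge $v_1 u_1$ lies on exactly two $3$-cycles, both $3$-faces, and is therefore contractible in $G$, giving the contradiction directly.
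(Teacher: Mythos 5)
Your argument is correct, and it follows the paper's skeleton for most of its length: the case split on the two faces at $e$, the appeal to Theorem~\ref{IrreducibleFaces} when $e$ lies on an irreducible face, the extraction of a non-facial $3$-cycle $C$ from non-contractibility when both faces are $3$-faces, and the observation that the disk $D$ bounded by $C$ is internally triangulated because an irreducible face inside $D$ would carry an essential $3$-cycle. Where you genuinely diverge is the endgame. The paper caps the exterior of $D$ with a $3$-face to get a sphere triangulation $H$ and invokes the Steinitz-type fact that $H$, if not $K_4$, has a contractible edge, asserting that such an edge is also contractible in $G$; that last assertion is left unargued and is not automatic for a contractible edge of $H$ lying on $C$ itself, whose endpoints may have common neighbours outside $D$. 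Your innermost-disk induction replaces this step: you always work with an edge $v_1u_1$ whose endpoint $u_1$ is interior to $D$, so every $3$-cycle through it stays in $\overline{D}$ and contractibility relative to the disk coincides with contractibility in $G$; minimality of $D$ then forces $v_1u_1$ to be contractible, contradicting irreducibility. This is more self-contained (no appeal to the existence of contractible edges in sphere triangulations) and cleanly sidesteps the boundary-edge issue in the paper's final sentence, at the cost of the topological bookkeeping you flag --- which does go through: the new $3$-cycle $C'$ lies in $\overline{D}$, the non-disk side of $C'$ must contain the complement of $\overline{D}$ (otherwise the surface would be a union of two disks, i.e.\ a sphere), so the disk $D'$ bounded by $C'$ lies in $\overline{D}$, and the two $3$-faces at $v_1u_1$ fall on opposite sides of $C'$, so $D'$ contains strictly fewer faces than $D$.
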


\begin{proof}

Let $G$ be an irreducible map.
An edge of $G$ is either on the boundary of an irreducible
face or on two $3$-faces.
In the first case the edge is essentially noncontractible by
Theorem~\ref{IrreducibleFaces}.
In the second case since the edge is not contractible it must be on a $3$-cycle 
$C$ which is not a $3$-face.
We show that $C$ is essential.
Assume $C$ is not essential.
On the surface $C$ bounds a disk $D$.
A new map $H$ on the sphere can be obtained by replacing the exterior of $D$ 
with a $3$-face.

The map $H$ cannot contain a large face.
If $H$ contains a large face $F = v_1 v_2 \dots v_m$ then at least one vertex,
say $v_2$ of $F$ must be in the interior of the (assumed) non-essential 
$3$-cycle $C$.
Then the essential $3$-cycle $v_1 v_2 v_3$ is in the disk $D$.  
But this is not possible.

The map $H$ contains no large face and is a triangulation of the sphere.
If $H$ is~$K_4$ then any interior edge of $D$ is contractible in $G$.
If $H$ is not $K_4$ then there are contractible edges of $H$ which are also 
contractible in $G$. 

\end{proof}

The initial class $\mathcal{M}_0$ is the class of all irreducible maps on 
the surface 
$S$ and $(\mathcal{M}_0;E)$ generates $\mathcal{M}_1$, the class of all
face irreducible maps on the surface $S$.

\section{Generating irreducible maps}
\label{findirrmaps}

Irreducible maps which we generate in Step 2 
have a very nice property which makes them easy to obtain.
Each irreducible map $G$ is a ``submap'' of an irreducible triangulation $T$, 
i.e. $G$ is obtained by removing zero or more vertices and adjacent edges
from~$T$.

The class to be generated is the class of irreducible maps.
The initial class is the class of irreducible triangulations.
The expansion operation is to remove a set of vertices and adjacent edges
with the condition 
that the resulting embedded graph is in the class of irreducible maps.
Theorem~\ref{TheoremIrreducibleMaps} shows that this expansion operation does
generate all of the irreducible maps.

The following theorem proves another property of irreducible faces and provides
us with the tool to prove Theorem~\ref{TheoremIrreducibleMaps}.

\begin{theorem}
\label{TriangulateIrrFace}

Let $F$ be an irreducible face of a map $G$ (which need not be irreducible).
We may replace the interior of $F$ with new vertices, edges, and faces 
to create a new map $G'$ such that
the new faces are 3-faces, 
the new edges are noncontractible in $G'$, and
the interior of the union of the new vertices, edges, and faces is simply 
connected.

\end{theorem}

\begin{proof}

There is one special irreducible face which we handle separately.
$P^3$ shown in Figure~\ref{mapsproj} is a single $3$-cycle embedded in the 
projective plane.
The dotted lines and open circles in the figure represent the edges and 
vertices which are added to obtain the irreducible triangulation $P^1$.
So we now can assume that $F$ is not the one face of~$P^3$.

Let $F = v_1 v_2 \dots v_m$ be an irreducible face of $G$.
We could triangulate $F$ by placing an $m$-cycle $w_1 w_2 \dots w_m$
in the interior of $F$ and an additional vertex $x$ inside this $m$-cycle.
Adding the edges $w_i v_i, w_i v_{i+1}, w_i x$ for $i$, $1 \le i \le m$ would
fill~$F$ with $3$-faces.
We could then contract edges in the interior of $F$ until no more edges in
the interior of $F$ are contractible.
However, this map might not contain $G$.
This might occur if, while contacting edges in the interior of $F$, 
two vertices of the boundary of $F$ are the ends of an edge which is
contracted thus merging these two vertices of $G$.
So triangulating of~$F$ must be done with more care.

We use the $V$-reduction to add one vertex at a time to the interior of $F$ 
and to attach the new vertex to vertices which are on the boundary of the 
large face.
The $V$-reduction is shown in Figure~\ref{removevertex4}.
This figure is similar to the previous figures showing expansions and 
reductions.
Figure~\ref{removevertex4} shows only four vertices of the large face, more
vertices may be used.
The figure also shows dashed curves.
Each dashed curve represents part of an edge which is not
completely contained in the simply connected component of the surface
represented by the figure.
One requirement for the application of the $V$-reduction is that
at least four edges are added joining consecutive vertices on the boundary of 
the face to the new vertex.
With each $V$-reduction the size of the large face is reduced.
We can apply the $V$-reduction only a finite number of times.
Let the new vertex be~$w$ and let these consecutive vertices on the 
boundary of the face be $v_1$, $v_2$, \dots, $v_k$ with $k \ge 4$.
A second requirement for the application of the $V$-reduction is 
that the $3$-cycles $v_i v_{i+1} v_{i+2}$ be essential for $i$,
$1 \le i \le k-2$.
Since the $3$-cycle $w v_i v_{i+2}$ is homeomorphic in $S$ to the
essential $3$-cycle $v_{i+1} v_i v_{i+2}$ for $i$,
$1 \le i \le k-2$ each new edge $w v_i$ is essentially noncontractible for $i$,
$1 \le i \le k$.

\begin{figure}
\centering
\psset{unit=.009\textwidth}
\parbox{.39\textwidth}{%
\centering
\ttfamily
\begin{pspicture*}(5,-10)(35,25)
\rput{180}(20,4){\pspolygon[linewidth=.3pt](.5,0)(4,.6)(4,-.6)}
\rput{225}(20,4){\pspolygon[linewidth=.3pt](.5,0)(4,.6)(4,-.6)}
\psline[linewidth=.3pt](20,4)(16,2.25)
\psline[linewidth=.3pt]{*-*}(20,4)(25,0)
\rput{270}(25,0){\pspolygon[linewidth=.3pt](.5,0)(4,.6)(4,-.6)}
\rput{225}(25,0){\pspolygon[linewidth=.3pt](.5,0)(4,.6)(4,-.6)}
\psline[linewidth=.3pt](25,0)(23.5,-3.75)
\psline[linewidth=.3pt](25,0)(28,0)

\rput{180}(20,11){\pspolygon[linewidth=.3pt](.5,0)(4,.6)(4,-.6)}
\rput{135}(20,11){\pspolygon[linewidth=.3pt](.5,0)(4,.6)(4,-.6)}
\psline[linewidth=.3pt](20,11)(16,12.75)
\psline[linewidth=.3pt]{*-*}(20,11)(25,15)
\rput{90}(25,15){\pspolygon[linewidth=.3pt](.5,0)(4,.6)(4,-.6)}
\rput{135}(25,15){\pspolygon[linewidth=.3pt](.5,0)(4,.6)(4,-.6)}
\psline[linewidth=.3pt](25,15)(23.5,18.75)
\psline[linewidth=.3pt](25,15)(28,15)

\psline[linewidth=.3pt]{*-}(28,7.5)(20,4)
\psline[linewidth=.3pt]{*-}(28,7.5)(25,0)
\psline[linewidth=.3pt]{*-}(28,7.5)(20,11)
\psline[linewidth=.3pt]{*-}(28,7.5)(25,15)
\psline[linewidth=.3pt](20,4)(20,11)

\psarc[linestyle=dashed,dash=3pt 3pt,linewidth=.3pt](16,3){10}{95}{315}
\psarc[linestyle=dashed,dash=3pt 3pt,linewidth=.3pt](16,12){10}{45}{265}

\end{pspicture*}}
\parbox{.19\textwidth}{%
\centering
\ttfamily
\begin{pspicture}(0,0)(15,20)
\psline[linewidth=1pt]{->}(5,10)(10,10)
\rput(7.5,12){{V}}
\end{pspicture}}
\parbox{.39\textwidth}{%
\centering
\ttfamily
\begin{pspicture*}(5,-10)(35,25)
\rput{180}(20,4){\pspolygon[linewidth=.3pt](.5,0)(4,.6)(4,-.6)}
\rput{225}(20,4){\pspolygon[linewidth=.3pt](.5,0)(4,.6)(4,-.6)}
\psline[linewidth=.3pt](20,4)(16,2.25)
\psline[linewidth=.3pt]{*-*}(20,4)(25,0)
\rput{270}(25,0){\pspolygon[linewidth=.3pt](.5,0)(4,.6)(4,-.6)}
\rput{225}(25,0){\pspolygon[linewidth=.3pt](.5,0)(4,.6)(4,-.6)}
\psline[linewidth=.3pt](25,0)(23.5,-3.75)
\psline[linewidth=.3pt](25,0)(28,0)

\rput{180}(20,11){\pspolygon[linewidth=.3pt](.5,0)(4,.6)(4,-.6)}
\rput{135}(20,11){\pspolygon[linewidth=.3pt](.5,0)(4,.6)(4,-.6)}
\psline[linewidth=.3pt](20,11)(16,12.75)
\psline[linewidth=.3pt]{*-*}(20,11)(25,15)
\rput{90}(25,15){\pspolygon[linewidth=.3pt](.5,0)(4,.6)(4,-.6)}
\rput{135}(25,15){\pspolygon[linewidth=.3pt](.5,0)(4,.6)(4,-.6)}
\psline[linewidth=.3pt](25,15)(23.5,18.75)
\psline[linewidth=.3pt](25,15)(28,15)

\psline[linewidth=.3pt](20,4)(20,11)

\psarc[linestyle=dashed,dash=3pt 3pt,linewidth=.3pt](16,3){10}{95}{315}
\psarc[linestyle=dashed,dash=3pt 3pt,linewidth=.3pt](16,12){10}{45}{265}

\end{pspicture*}}

\caption{Expansion for removing vertex, $k = 4$}
\label{removevertex4}
\end{figure}
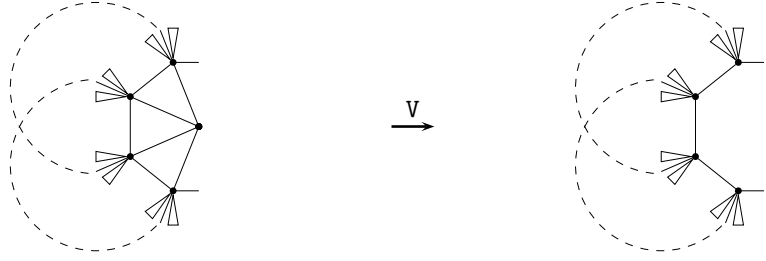

We now describe how to add the first vertex to the interior of $F$.

First assume  the vertices $v_1$, $v_2$, \dots, $v_m$ of $F$ are distinct.
This is always the case when $m \le 5$ since each vertex on $F$ is adjacent 
to all the other vertices on~$F$.
Since~$F$ is an irreducible face and $m \ge 4$ we use all the vertices of
$F$ for the $V$-reduction.
The added edges are all essentially noncontractible and all the new faces
are 3-faces satisfying the theorem.

Now assume the vertices of $F$ are not distinct. 
There is at least
one vertex which occurs at least twice on the boundary of~$F$.
To triangulate the interior of $F$ we must add
more than one new vertex to the interior of $F$ to prevent 
multiple edges.

Let $k$ be the number of vertices in the longest paths on the boundary of~$F$.
Since there is at least
one vertex which occurs at least twice on the boundary of~$F$ we have 
$k < m$.
If necessary we relabel the vertices so that
$p = v_1 v_2 \dots v_k$ is one of the longest paths on the 
boundary of $F$.
The vertices $v_1$, $v_2$, \dots, $v_k$ are all distinct. 
The vertex $v_m$ occurs at least twice on $F$ and is on $p$ for, 
otherwise, we could extend~$p$  
to $v_m$ producing a longer path on the boundary of $F$.
Since $v_m$ is adjacent to $v_1$ and $v_2$, $v_m = v_a$ for some $a$, 
$3 \le a \le k$.
Likewise, $v_{k+1} = v_b$ for some $b$, $1 \le b \le k-2$.

We assert that $k \ge 4$.
Since $F$ is an irreducible face $v_i$, $v_{i+1}$, and $v_{i+2}$ are pairwise
adjacent and distinct for every $i$, $1 \le i \le m$.  
So $k \ge 3$.  Suppose that $k = 3$.
Then for every $i$, $1 \le i \le m$ we have $v_i = v_{i+3}$.  
The edges of $F$ would be 
$v_1 v_2$, $v_2 v_3$, $v_3 v_1$, $v_1 v_2$, $v_2 v_3$, $v_3 v_1$, \dots.
Since a face can occur on an edge at most twice $F = v_1 v_2 v_3 v_1 v_2 v_3$
and the map is $P_3$ contrary to our earlier assumption.

We use $\{v_1, v_2, \dots, v_k\}$ to 
apply a $V$-reduction and call the new vertex $w_1$.
The resulting large face $F_1$ is $v_1 w_1 v_k \dots v_m$.
We observe that the face $F_1$ might not be irreducible but it almost 
satisfies the definition.
For every $i$, $k \le i \le m-1$, $v_i$ is adjacent to $v_{i+2}$.
Since $v_m = v_a$ and $v_{k+1} = v_b$, 
$w_1$ is adjacent to $v_m$ and $v_{k+1}$.
The only possible missing condition is that $v_1$ might not be adjacent 
to $v_k$.

In the remainder of the proof we add additional vertices 
$w_2$, $w_3$, \dots, in a similar way.
With each additional vertex $w_{n+1}$ a face $F_{n+1}$ 
is produced which has fewer
vertices of the original face $F$ than $F_n$ has.
When a face $F_{n+1}$ with no vertices of the original face $F$ is obtained
then we finish triangulating $F$ as described below.

Assume we have added $n$ new vertices to the interior of $F$ and we
have a large face $F_n$ with boundary 
$v_1 w_1 \dots w_n v_j \dots v_m$ such that $j \le m$ and
$w_n$ is adjacent to $v_{j+1}$ (which is $v_1$ if $j = m$).
We have shown above that this assumption is true for $n=1$.
Starting from this assumption for $n$ we show either 
({\em i}) that we can finish triangulating $F$ or 
({\em ii}) that the assumption is true for $n+1$ and the face $F_{n+1}$ 
has fewer vertices of the original face $F$ than $F_n$ has.
Thus the construction terminates in a finite number of steps.

We obtain ({\em i})
when the vertices on the boundary of $F_n$ are distinct.
In this case, the vertices for attaching $w_{n+1}$ are 
$\{w_n, v_j, \dots, v_m, v_1, w_1\}$.
Recall that when $w_1$ is attached $w_1$ is adjacent to $v_m$.
If $n+1 \le 3$ then there is no longer a large face in $F$ and all the edges
which have been added in the interior of $F$ are essentially noncontractible.
If $n+1 > 3$ then there is a resulting large face $F_{n+1}$ with boundary
$w_1 w_2 \dots w_{n+1}$.
We arbitrarily triangulate the face $F_{n+1}$ with edges 
$w_{n+1} w_i$ for $i$, $1 < i < n$.
Some of these $n-2$ edges might be contractible but all the other 
edges which have been added to the interior of $F$ 
are essentially noncontractible.
We repeatedly contract any contractible edge in the interior of $F_{n+1}$ 
until there are no contractible edges in the interior of $F_{n+1}$.
All of the edges in the interior of $F$ are then noncontractible
and we have triangulated the face $F$ as required.

We can show ({\em ii}) when the vertices on the boundary of $F_n$ are 
not distinct.
The vertices $w_n$, $v_j$, $v_{j+1}$, and $v_{j+2}$ are distinct 
since $w_n$ is not on
the boundary of $F$ and the vertices $v_j$, $v_{j+1}$, and $v_{j+2}$ are 
pairwise adjacent.
Let $v_{j'}$ be the vertex on the boundary of $F_n$ such that 
$w_n$, $v_j$, $v_{j+1}$, $v_{j+2}$, \dots, $v_{j'}$ are distinct 
and $v_{j'+1}$ is in
$\{v_j, v_{j+1}, v_{j+2}, \dots, v_{j'-2}\}$.
We use $\{w_n, v_j, v_{j+1}, v_{j+2}, \dots, v_{j'}\}$ to apply a 
$V$-reduction with the new vertex $w_{n+1}$.
In this way we obtain a smaller face $F_{n+1}$ which fulfills our assumption
for $n+1$ which is ({\em ii}). 
\end{proof}

\begin{theorem}
\label{TheoremIrreducibleMaps}

If $G$ is an irreducible map on a surface $S$ then there is at least
one irreducible triangulation $T$ of $S$ from which $G$ may be 
obtained by removing a set of vertices
from $T$ along with the edges containing these vertices.

\end{theorem}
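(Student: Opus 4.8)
The plan is to construct $T$ by filling in every large face of $G$. Since $G$ is an irreducible map, each of its faces is either a $3$-face or an irreducible face, so I would apply Theorem~\ref{TriangulateIrrFace} to each irreducible face $F$ in turn, replacing the interior of $F$ with new vertices, edges, and $3$-faces whose interior is simply connected and whose new edges are noncontractible in the map produced at that stage. Carrying this out for all irreducible faces yields a map $T$ on the same surface $S$ in which every face is a $3$-face, and the construction of Theorem~\ref{TriangulateIrrFace} keeps the map simple, so $T$ is a triangulation. (If $G$ has no large face then it is already a triangulation with no contractible edges, and I take $T=G$; the exceptional face of $P^3$ is dealt with by the special case built into Theorem~\ref{TriangulateIrrFace}.) Because every added vertex and edge lies in the interior of some large face of $G$, deleting all the new vertices together with their incident edges merges each filled region back into the original large face and recovers exactly $G$. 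Hence $G$ is obtained from $T$ by removing a set of vertices, and the only thing left to establish is that $T$ is \emph{irreducible}, i.e.\ that no edge of $T$ is contractible.

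I would treat the edges of $T$ in two groups. For an edge $e$ that already lay in $G$, Theorem~\ref{IrreducibleMap} gives an essential $3$-cycle $C$ of $G$ through $e$. This cycle uses only edges of $G$, so as a subset of $S$ it is unchanged by the filling operation and therefore remains essential in $T$; since an essential $3$-cycle bounds no disk it can never be a $3$-face. Thus $e$ lies on a $3$-cycle of $T$ that is not a face, which is exactly the condition making $e$ noncontractible in $T$.

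For a new edge $e$, added while filling some irreducible face $F$, Theorem~\ref{TriangulateIrrFace} already tells me that $e$ is noncontractible in the map obtained right after $F$ is filled, and that $e$ is incident to at least one of the new interior vertices of $F$. The point I would need to verify is that this noncontractibility is not lost when the remaining large faces are filled, and here I expect the disjointness of the faces to do the work: a new interior vertex $w$ of $F$ is joined only to vertices of $\overline{F}$, while filling another face $F'$ adds edges only inside $\overline{F'}$ and hence creates no new edge at $w$. Consequently the set of $3$-cycles through any edge incident to $w$, and in particular through $e$, is the same in $T$ as it was immediately after $F$ was filled, so the non-facial $3$-cycle witnessing the noncontractibility of $e$ still survives in $T$, and $e$ remains noncontractible.

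Since every edge of $T$, old or new, is then noncontractible, $T$ is an irreducible triangulation of $S$ from which $G$ is recovered by deleting the added vertices, which is the assertion of the theorem. The main obstacle, as the argument makes clear, is precisely the stability of noncontractibility under the successive filling of \emph{all} the large faces rather than just one; I would resolve it by the two distinct mechanisms above, namely the topological invariance of essentiality for the edges inherited from $G$ and the locality of the witnessing $3$-cycles within each face for the newly added edges.
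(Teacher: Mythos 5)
Your proposal is correct and follows essentially the same route as the paper: build $T$ by applying Theorem~\ref{TriangulateIrrFace} to each irreducible face, then observe that the edges of $G$ are essentially noncontractible by Theorem~\ref{IrreducibleMap} while the newly added edges are noncontractible by Theorem~\ref{TriangulateIrrFace}. Your extra care about why noncontractibility survives the successive filling of the remaining faces (topological invariance of essential $3$-cycles, and the fact that subdividing other faces can neither destroy a witnessing non-facial $3$-cycle nor turn it into a face) makes explicit a point the paper leaves implicit, but it is the same argument.
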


\begin{proof}
Let $G$ be the irreducible map.
Using Theorem~\ref{TriangulateIrrFace} we ``irreducibly triangulate'' each 
irreducible face of $G$ to produce a triangulation $T$.
The edges of $T$ which are in $G$ are essentially noncontractible by 
Theorem~\ref{IrreducibleMap}.
The edges of $T$ which are not in $G$ are noncontractible by
Theorem~\ref{TriangulateIrrFace}.
It may be possible to triangulate the irreducible faces of $G$ 
in more than one way
so the irreducible triangulation $T$ may be one of many which 
satisfy the theorem. 
\end{proof}

\section{Generating irreducible triangulations}
\label{findirrtri}

Irreducible triangulations which are generated in Step 1 have been extensively
studied.
For any fixed surface the number of irreducible triangulations is 
finite~\cite{MR1021367}.
Irreducible triangulations have been determined and displayed by a number of 
authors:
the single irreducible triangulation of the sphere ($S_0$) by 
Steinitz and Rademacher~\cite{StRa};
the two irreducible triangulations of the projective plane or the cross 
surface ($N_1$) by Barnette \cite{MR84f:57009}; 
the 21 irreducible triangulations of the torus ($S_1$) by 
Lawrencenko \cite{MR914777});
and the $29$ irreducible triangulations of the Klein bottle ($N_2$) 
by Lawrencenko and Negami \cite{MR98h:05067} and 
Sulanke \cite{math.CO/0407008}.
The irreducible triangulations of the double torus ($S_2$), 
the triple cross surface ($N_3$), and the quadruple cross surface ($N_4$)
have been generated by the author using an extension of computer program 
{\em surftri} \cite{surftri}.
The counts of irreducible triangulations are shown in 
Table~\ref{Counts}.
The largest of these classes ($N_4$) required 54 CPU days.
The author estimates it would take CPU centuries for this program 
to generate the irreducible triangulations for $S_3$ or $N_5$.

We describe briefly how the two stage generation process for irreducible 
triangulations of a surface $S$ works.
In the first stage we use the vertex splitting operation 
to generate triangulations on slightly simpler surfaces than $S$.
We impose certain conditions necessary for the second stage on these 
triangulations.
These conditions limit the triangulations generated to a finite number.
In the second stage these triangulations are modified in such a way that new
handles or crosscaps are added to produce irreducible triangulations on $S$.
Only the reduction operation for the second stage is described.
More details may be found in \cite{gentriang}.

Let $S$ (not the sphere) be the surface for which we are generating 
irreducible triangulations.
Let $G$ be an irreducible triangulation of $S$. 
Theorem~\ref{transverse} below shows that $G$ contains many nonseparating 
$3$-cycles.
Let $w_1 w_2 w_3$ be a nonseparating $3$-cycle of $G$.
We create a new triangulation $G'$ of a different surface $S'$ 
using the operations described below.

If the $3$-cycle $w_1 w_2 w_3$ in $G$ is two-sided 
then we cut along $w_1 w_2 w_3$ to produce
a surface $S'$ with a boundary consisting of two disjoint $3$-cycles 
$u'_1 u'_2 u'_3$
and $v'_1 v'_2 v'_3$ where $u'_i$ and $v'_i$ come 
from the original vertex $w_i$ for $i=1,2,3$. 
We cap the holes with two $3$-faces $u'_1 u'_2 u'_3$ and $v'_1 v'_2 v'_3$.
$G'$ is now a triangulation of $S'$ which has an Euler genus two less than
the Euler genus of $S$.

If the $3$-cycle $w_1 w_2 w_3$ in $G$ is one-sided 
then we cut along $w_1 w_2 w_3$ to produce
a surface $S'$ with a boundary consisting of the 6-cycle 
$u'_1 u'_2 u'_3 v'_1 v'_2 v'_3$ where $u'_i$ and $v'_i$
again come from the original vertex $w_i$ for $i=1,2,3$. 
We cap the hole with a new vertex $t'$ and six $3$-faces 
$t' u'_1 u'_2$, $t' u'_2 u'_3$, $t' u'_3 v'_1$,
$t' v'_1 v'_2$, $t' v'_2 v'_3$, and $t' v'_3 u'_1$.
$G'$ is now a triangulation  of $S'$ which has an Euler genus one less than
the Euler genus of $S$.

The following theorem is similar to Lemma 4 of \cite{MR84f:57009} and 
Lemma 4 of \cite{MR914777}.
In a triangulation the {\em link} of a vertex $v$ is the cycle 
which is the boundary of the union of the faces containing $v$.

\begin{theorem}
\label{transverse}
Let $G$ be an irreducible triangulation of a surface other than the sphere,
let $v$ be a vertex of $G$, and let $L$ be the link of $v$.
Then there are two nonseparating $3$-cycles $v v_i v_k$ and $v v_j v_l$
such that $v_i$, $v_j$, $v_k$, and $v_l$ are distinct
and one path from $v_i$ to $v_k$ in $L$ contains $v_j$ and
the other path from $v_i$ to $v_k$ in $L$ contains $v_l$.
\end{theorem}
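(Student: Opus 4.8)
The plan is to translate the statement into the existence of two ``crossing chords'' of the link $L = u_1 u_2 \cdots u_d$ and then to read off both the nonseparating property and the interleaving condition from the mod-$2$ intersection form of $S$. Writing the faces at $v$ as $v u_1 u_2, v u_2 u_3, \ldots, v u_d u_1$, the cyclic order of the edges $v u_s$ around $v$ is exactly the cyclic order of $L$. Call an edge $u_a u_b$ of $G$ with $u_a, u_b$ on $L$ and $b \neq a \pm 1$ a \emph{chord}; each chord gives a non-facial $3$-cycle $v u_a u_b$. The first thing I would record is that irreducibility places a chord at every vertex of $L$: since $G$ is an irreducible triangulation of a surface other than the sphere, Theorem~\ref{IrreducibleMap} makes each edge $v u_s$ essentially noncontractible, so some $3$-cycle $v u_s u_w$ is essential. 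The facial $3$-cycles at $v u_s$ are $v u_{s-1} u_s$ and $v u_s u_{s+1}$, which bound faces and so are inessential; hence $w \neq s \pm 1$, and $u_s u_w$ is a chord. (This is the only place the non-sphere hypothesis is used, and it fails precisely for $K_4$ on the sphere.)

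Next I would prove the key reduction: if two chords $u_a u_b$ and $u_c u_d$ \emph{cross}, meaning the four vertices occur in the cyclic order $u_a, u_c, u_b, u_d$ on $L$, then both $3$-cycles $C_1 = v u_a u_b$ and $C_2 = v u_c u_d$ are nonseparating, the four vertices are distinct, and the arcs of $L$ from $u_a$ to $u_b$ separate $u_c$ from $u_d$, which is exactly the interleaving the theorem demands (with $v_i v_k = u_a u_b$ and $v_j v_l = u_c u_d$). As embedded simple closed curves, $C_1$ and $C_2$ meet only at $v$, since they share no link vertex; and at $v$ the germs $v u_a, v u_b$ of $C_1$ alternate around $v$ with the germs $v u_c, v u_d$ of $C_2$, so the two curves cross transversally in the single point $v$ and their intersection number in $H_1(S;\mathbf{Z}/2)$ is $1$. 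By Poincar\'e duality this intersection form is nondegenerate, so any separating (null-homologous) simple closed curve pairs to $0$ with every class; since $[C_1]\cdot[C_2]=1$, neither class vanishes and both curves are nonseparating. Thus it suffices to exhibit a single crossing pair of chords.

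The heart of the matter, and the step I expect to be the main obstacle, is showing that such a crossing pair must exist, i.e.\ that the chords cannot all be pairwise non-crossing. I would argue by contradiction using an innermost/minimal-arc selection. Assuming no two chords cross, choose a chord $\alpha = u_a u_b$ whose shorter spanning arc $P = u_a u_{a+1} \cdots u_b$ has the fewest interior link vertices; say $P$ has $N \ge 2$ edges and hence $N-1 \ge 1$ interior vertices. By the first step the vertex $u_{a+1}$ lies on a chord $u_{a+1} u_{c'}$, and non-crossing with $\alpha$ forces $c' \in \{a, a+1, \ldots, b\}$, while $c' \notin \{a, a+1, a+2\}$ because $u_a$ and $u_{a+2}$ are the link-neighbours of $u_{a+1}$. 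When $N = 2$ no admissible $c'$ exists, an immediate contradiction; when $N \ge 3$ the chord $u_{a+1} u_{c'}$ has a spanning arc lying inside $P$ with at most $c' - a - 2 \le N-2$ interior vertices, so its shorter arc has fewer than $N-1$ interior vertices, contradicting the minimality of $\alpha$. Either way we reach a contradiction, so two chords cross, and combining this with the intersection-form computation of the previous paragraph yields the two required nonseparating $3$-cycles.
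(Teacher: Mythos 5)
Your proposal is correct, and its combinatorial core coincides with the paper's: both arguments select a chord of the link whose shorter spanning arc is shortest possible and then use a chord emanating from an interior vertex of that arc to produce the interleaved (``crossing'') pair. The paper does this directly --- the minimal nonfacial $3$-cycle $v v_i v_k$ is itself one of the two required cycles, and minimality forces the second endpoint $v_l$ onto the opposite arc of $L$ --- whereas you package the same minimality as a proof by contradiction that the chords cannot be pairwise non-crossing; the two are logically equivalent, your version being slightly less direct. The genuine divergence is in how nonseparation is extracted from the interleaving. The paper uses an elementary connectivity observation: if $v v_i v_k$ separated the surface, then $v_j$ and $v_l$ would lie in different components of the complement, yet the edge $v_j v_l$, which is disjoint from that cycle, joins them; the other cycle is handled symmetrically. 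You instead observe that the two $3$-cycles meet transversally in the single point $v$, so their intersection number in $H_1(S;\mathbf{Z}/2)$ is $1$, and invoke the fact that a separating simple closed curve is null-homologous mod $2$ and pairs to $0$ with every class. Both are valid; the intersection-form route is heavier machinery but makes the implication ``crossing chords give two nonseparating cycles'' conceptually transparent, while the paper's argument stays entirely elementary. Two minor remarks: producing a chord at every link vertex needs only the definition of irreducibility (every edge lies on a nonfacial $3$-cycle), not the full strength of Theorem~\ref{IrreducibleMap}; and your bookkeeping in the $N\ge 3$ case is sound, since the inner arc from $u_{a+1}$ to $u_{c'}$ has at most $N-2$ interior vertices.
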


\begin{proof}
Since $G$ is irreducible,
for any vertex $u$ in $L$ the edge $v u$ is on a nonfacial $3$-cycle $v u w$.
Pick two vertices $v_i$ and $v_k$ in $L$ for which $v v_i v_k$ is a
nonfacial $3$-cycle and the distance from $v_i$ to $v_k$ in $L$ is minimal.
The shorter path from $v_i$ to $v_k$ in $L$ must have an interior vertex
since $v v_i v_k$ is not a face.
Let the vertex $v_j$ be such an interior vertex on the shorter path from $v_i$
to $v_k$ in $L$.
Let $v_l$ be a vertex in $L$ such that $v v_j v_l$ is a nonfacial $3$-cycle.
$v_l$ is not on the path from $v_i$ to $v_k$ in $L$ containing $v_j$ 
since the distance from $v_j$ and $v_l$ in $L$ is at least the distance 
from $v_i$ and $v_k$ in $L$.
Suppose $v v_i v_k$ separates the surface.  
Then $v_j$ and $v_l$ would be in different components but $v_j v_l$ is 
an edge.
Therefore, $v v_i v_k$ is nonseparating and, similarly, $v v_j v_l$ is also
nonseparating. 
\end{proof}

\section{Implementation}
\label{implement}

The computer program {\em surftri} \cite{surftri} implements the procedures 
to generate maps on various surfaces.
Many of the ideas and much of the code used in {\em surftri} are taken from 
the work of Brinkmann and McKay.
Their program {\em plantri} \cite{plantri} generates triangulations and maps
on the sphere as well as other classes of planar graphs.

The operation of {\em plantri} is described in \cite{MR2357364,plantripaper}.
To obtain triangulations of the 
sphere with $n$ vertices {\em plantri} starts with the only irreducible 
triangulation of the sphere, $K_4$, and
vertices are split using variations of the $E$-expansion until the 
triangulations have $n$ vertices.
If maps with $n$ 
vertices are being generated then as each triangulation with $n$ vertices is
produced the program switches to the mode of using the 
operation of removing edges. 
Edges are removed from $3$-faces using the $D$-expansion.

To generate triangulations of a surface in the program {\em surftri} we start
with the irreducible triangulations of that surface.
Vertices are split using the $E$-expansion to obtain triangulations of the
surface using procedures similar those used in {\em plantri}.
The list of the irreducible triangulations is provided as input to 
{\em surftri}.

\begin{table}
\centering
\caption{The number of irreducible triangulations and irreducible maps}
\begin{tabular}{r|r r r r r r}
                                    & $S_1$ & $S_2$ & $N_1$ & $N_2$ & $N_3$ & $N_4$ \\
\hline
Irreducible triangulations          & 21    &  396784 & 2   &  29   &  9708 &  6297982 \\
Irreducible maps                    & 68    & 2181071 & 7   & 173   & 75596 & 62641140 \\
\end{tabular}
\vspace{.1in}
\label{Counts}
\end{table}

To generate maps on a surface with $n$ vertices {\em surftri} generates 
the face irreducible maps with $n$ vertices.
Vertex splitting, the $E$-expansion, is again used.
{\em surftri} starts with irreducible maps and
vertex splitting is only done if each irreducible face remains irreducible.
The irreducible maps are provided as input, having been pregenerated and stored
on disk.
As each face irreducible map with $n$ vertices is
produced {\em surftri} switches to the mode of using the 
operation of removing edges. 
Edges are removed from $3$-faces using the $D$-expansion.

The irreducible maps with at least one large face 
were generated using Theorem~\ref{TheoremIrreducibleMaps}
rather than the construction used in its proof.
For a fixed surface each irreducible triangulation
was processed by removing sets of 
vertices and checking if the results were irreducible maps.
Duplicates were removed by sorting all the irreducible maps obtained in this 
way.
The number of irreducible triangulations of a surface is 
finite~\cite{MR1021367} and each irreducible map is obtained by removing 
vertices from an irreducible triangulation.
Therefore, the number of irreducible maps on a surface is finite.
We also show the counts of irreducible maps in Table~\ref{Counts}.

\section{Maps, closed $2$-cell embeddings, and polyhedral embeddings}
\label{facewidth}

We have described the steps listed in Section~\ref{introduction} for 
generating maps (open $2$-cell embeddings).  
We now consider how more restricted classes of maps can be generated by 
modifying these steps.

The {\em face-width} of an embedded graph 
on a surface is the smallest number $k$ such that there is a noncontractible 
closed curve on the surface that intersects the graph at $k$ points 
\cite{MR961150}.

Maps are embedded graphs which have face-width at least $1$ and are 
$1$-connected.

A {\em closed $2$-cell embedding} is a map for which the closure of every 
face is a closed $2$-cell.
For every face $F$ of a closed $2$-cell embedding no vertex occurs more 
than once on $F$.
Closed $2$-cell embeddings are those maps which have face-width at least~$2$ 
and are $2$-connected  \cite{MR1844449}.
The D-expansion does not increase the face-width or the connectivity of a map.
So in order to obtain a closed $2$-cell embedding when we apply the D-expansion
we must apply the operation to another closed $2$-cell embedding.
However, the E-expansion may increase the face-width of some maps from $1$
to $2$.
We only modify the final step used to generate maps to obtain a 
procedure for generating closed $2$-cell embeddings.

The generation of the closed $2$-cell embeddings with $n$ vertices of a fixed 
surface consists of four steps:

\begin{enumerate}
\item Generate the {\em irreducible triangulations} of the surface.
\item Generate the {\em irreducible maps} of the surface from the 
irreducible triangulations by removing vertices.
\item Split vertices (E-expansions) of the irreducible maps
to obtain {\em face irreducible maps} with $n$ vertices.
\item Remove edges (D-expansions) of the face irreducible maps
while the maps remain {\em closed $2$-cell embeddings}.
\end{enumerate}

A {\em polyhedral embedding} is a map for which the closures of any pair of 
faces have exactly one vertex, exactly one edge, or no points in common.
Polyhedral embeddings on a surface are those maps which have face-width at 
least $3$ and are $3$-connected \cite{MR1844449}.

We use Theorem~\ref{IrreducibleFaces} to show that any map with an 
irreducible face has face-width at most $2$.
Let $F = v_1 v_2 \dots v_m$ be an irreducible face on a surface $S$.
We construct a closed curve consisting of two segments.
One segment is in the interior of $F$ connecting $v_1$ and $v_3$.
The other segment is close to the edge $v_1 v_3$ and connects them.
This closed curve is homeomorphic in $S$ to the
essential $3$-cycle $v_1 v_2 v_3$ and thus is noncontractible.
This closed curve intersects
the graph at only $2$ points, $v_1$ and $v_3$.

Again we note that the D-expansion does not increase the face-width 
nor the connectivity of a map of a map.
Also, the E-expansion is restricted from eliminating irreducible faces. 
So we do not need to use any maps with irreducible faces in the generation of 
polyhedral embeddings.

The generation of the polyhedral embeddings with $n$ vertices of a fixed 
surface consists of only three steps:

\begin{enumerate}
\item Generate the {\em irreducible triangulations} of the surface.
\item Split vertices (E-expansions) of the irreducible triangulations
to obtain {\em triangulations} with $n$ vertices.
\item Remove edges (D-expansion) of the triangulations
while the maps remain {\em polyhedral embeddings}.
\end{enumerate}

In Table~\ref{MapcountsProj} we show the counts of maps, 
closed $2$-cell embeddings, and polyhedral embeddings on the projective plane
for increasing numbers of vertices.
We also show the counts for irreducible maps and face irreducible maps.
The {\em surftri} program produced these values.
Table~\ref{MapcountsTorus} shows the counts for maps on the torus.

We estimate that the entries in the tables which are blank would require 
more than $100$ days of CPU time to compute on $2.4$ GHz processors.
We adapted the data structures used in the {\em surftri} program to store 
the embeddings from the data structures used in {\em plantri}.
We modified these data structures to allow embeddings 
in non-orientable surfaces.
These modified data structures require more computer 
operations than are used in {\em plantri}.
The generation rates for {\em surftri} range from $1$ to $1.4$ 
million maps/second on a $2.4$ GHz processor.
When generating maps on the sphere the rates for {\em surftri}
are $0.7$ to $0.95$ of those rates observed using {\em plantri}.

\section{Irreducible Maps on the projective plane and the torus}
\label{display}

We provide drawings of the irreducible maps on the projective plane and torus.
For each irreducible map with large faces we indicate 
one of the irreducible triangulations from which it may be obtained.
The vertices and edges which have been removed from the irreducible 
triangulation are shown as open circles and dotted lines.
The irreducible triangulation chosen requires the minimum number of vertices
to be removed.

Figure~\ref{irrproj} shows the two irreducible triangulations of the 
projective plane \cite{MR84f:57009}.
Figure~\ref{mapsproj} shows the five irreducible maps with large faces 
on the projective plane.

There are $21$ irreducible triangulations, $T^1$--$T^{21}$, 
of the torus~\cite{MR914777}
which are shown in Figures~\ref{irrtorus1} and \ref{irrtorus2}.
Figures~\ref{mapstorus1}-\ref{mapstorus4} show the 47 irreducible maps, 
$T^{22}$--$T^{68}$, with large faces on the torus.

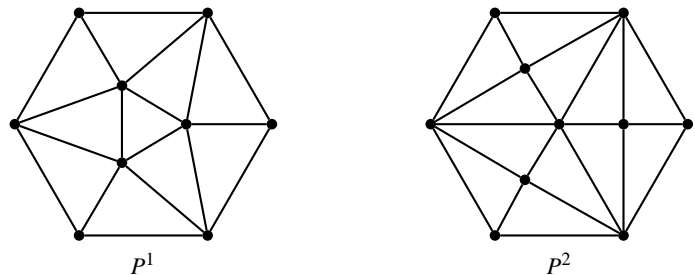
\begin{figure}[H]
\hfill
\psset{unit=.0045\textwidth}
\begin{pspicture}(-33,-36)(33,36)
\cnode*(30,0){2pt}{f1}
\cnode*(15,26){2pt}{e1}
\cnode*(-15,26){2pt}{d1}
\cnode*(-30,0){2pt}{f2}
\cnode*(-15,-26){2pt}{e2}
\cnode*(15,-26){2pt}{d2}
\cnode*(-5,-9){2pt}{a1}
\cnode*(-5,9){2pt}{b1}
\cnode*(10,0){2pt}{c1}
\ncline{f1}{e1}
\ncline{f1}{d2}
\ncline{f1}{c1}
\ncline{e1}{d1}
\ncline{e1}{b1}
\ncline{e1}{c1}
\ncline{d1}{f2}
\ncline{d1}{b1}
\ncline{f2}{e2}
\ncline{f2}{a1}
\ncline{f2}{b1}
\ncline{e2}{d2}
\ncline{e2}{a1}
\ncline{d2}{a1}
\ncline{d2}{c1}
\ncline{a1}{b1}
\ncline{a1}{c1}
\ncline{b1}{c1}
\rput[b](0,-35){\small $P^1$}
\end{pspicture}
\hfill
\psset{unit=.0045\textwidth}
\begin{pspicture}(-33,-36)(33,36)
\cnode*(30,0){2pt}{b1}
\cnode*(15,26){2pt}{e1}
\cnode*(-15,26){2pt}{d1}
\cnode*(-30,0){2pt}{b2}
\cnode*(-15,-26){2pt}{e2}
\cnode*(15,-26){2pt}{d2}
\cnode*(-8,-13){2pt}{a1}
\cnode*(15,0){2pt}{f1}
\cnode*(0,0){2pt}{c1}
\cnode*(-8,13){2pt}{g1}
\ncline{b1}{e1}
\ncline{b1}{d2}
\ncline{b1}{f1}
\ncline{e1}{d1}
\ncline{e1}{f1}
\ncline{e1}{c1}
\ncline{e1}{g1}
\ncline{d1}{b2}
\ncline{d1}{g1}
\ncline{b2}{e2}
\ncline{b2}{a1}
\ncline{b2}{c1}
\ncline{b2}{g1}
\ncline{e2}{d2}
\ncline{e2}{a1}
\ncline{d2}{a1}
\ncline{d2}{f1}
\ncline{d2}{c1}
\ncline{a1}{c1}
\ncline{f1}{c1}
\ncline{c1}{g1}
\rput[b](0,-35){\small $P^2$}
\end{pspicture}
\hfill\ 
\caption{Irreducible triangulations of the Projective Plane}
\label{irrproj}
\end{figure}

\begin{figure}[H]
\hfill
\psset{unit=.0045\textwidth}
\begin{pspicture}(-33,-36)(33,36)
\cnode*(30,0){2pt}{f1}
\cnode*(15,26){2pt}{e1}
\cnode*(-15,26){2pt}{d1}
\cnode*(-30,0){2pt}{f2}
\cnode*(-15,-26){2pt}{e2}
\cnode*(15,-26){2pt}{d2}
\cnode(-5,-9){2pt}{a1}
\cnode(-5,9){2pt}{b1}
\cnode(10,0){2pt}{c1}
\ncline{f1}{e1}
\ncline{f1}{d2}
\ncline[linestyle=dotted,linewidth=1.3pt]{f1}{c1}
\ncline{e1}{d1}
\ncline[linestyle=dotted,linewidth=1.3pt]{e1}{b1}
\ncline[linestyle=dotted,linewidth=1.3pt]{e1}{c1}
\ncline{d1}{f2}
\ncline[linestyle=dotted,linewidth=1.3pt]{d1}{b1}
\ncline{f2}{e2}
\ncline[linestyle=dotted,linewidth=1.3pt]{f2}{a1}
\ncline[linestyle=dotted,linewidth=1.3pt]{f2}{b1}
\ncline{e2}{d2}
\ncline[linestyle=dotted,linewidth=1.3pt]{e2}{a1}
\ncline[linestyle=dotted,linewidth=1.3pt]{d2}{a1}
\ncline[linestyle=dotted,linewidth=1.3pt]{d2}{c1}
\ncline[linestyle=dotted,linewidth=1.3pt]{a1}{b1}
\ncline[linestyle=dotted,linewidth=1.3pt]{a1}{c1}
\ncline[linestyle=dotted,linewidth=1.3pt]{b1}{c1}
\rput[b](0,-35){\small $P^3$}
\end{pspicture}
\hfill
\psset{unit=.0045\textwidth}
\begin{pspicture}(-33,-36)(33,36)
\cnode*(30,0){2pt}{b1}
\cnode*(15,26){2pt}{e1}
\cnode*(-15,26){2pt}{d1}
\cnode*(-30,0){2pt}{b2}
\cnode*(-15,-26){2pt}{e2}
\cnode*(15,-26){2pt}{d2}
\cnode(-8,-13){2pt}{a1}
\cnode(15,0){2pt}{f1}
\cnode*(0,0){2pt}{c1}
\cnode(-8,13){2pt}{g1}
\ncline{b1}{e1}
\ncline{b1}{d2}
\ncline[linestyle=dotted,linewidth=1.3pt]{b1}{f1}
\ncline{e1}{d1}
\ncline[linestyle=dotted,linewidth=1.3pt]{e1}{f1}
\ncline{e1}{c1}
\ncline[linestyle=dotted,linewidth=1.3pt]{e1}{g1}
\ncline{d1}{b2}
\ncline[linestyle=dotted,linewidth=1.3pt]{d1}{g1}
\ncline{b2}{e2}
\ncline[linestyle=dotted,linewidth=1.3pt]{b2}{a1}
\ncline{b2}{c1}
\ncline[linestyle=dotted,linewidth=1.3pt]{b2}{g1}
\ncline{e2}{d2}
\ncline[linestyle=dotted,linewidth=1.3pt]{e2}{a1}
\ncline[linestyle=dotted,linewidth=1.3pt]{d2}{a1}
\ncline[linestyle=dotted,linewidth=1.3pt]{d2}{f1}
\ncline{d2}{c1}
\ncline[linestyle=dotted,linewidth=1.3pt]{a1}{c1}
\ncline[linestyle=dotted,linewidth=1.3pt]{f1}{c1}
\ncline[linestyle=dotted,linewidth=1.3pt]{c1}{g1}
\rput[b](0,-35){\small $P^4$}
\end{pspicture}
\hfill
\psset{unit=.0045\textwidth}
\begin{pspicture}(-33,-36)(33,36)
\cnode*(30,0){2pt}{b1}
\cnode*(15,26){2pt}{e1}
\cnode*(-15,26){2pt}{d1}
\cnode*(-30,0){2pt}{b2}
\cnode*(-15,-26){2pt}{e2}
\cnode*(15,-26){2pt}{d2}
\cnode(-8,-13){2pt}{a1}
\cnode*(15,0){2pt}{f1}
\cnode*(0,0){2pt}{c1}
\cnode(-8,13){2pt}{g1}
\ncline{b1}{e1}
\ncline{b1}{d2}
\ncline{b1}{f1}
\ncline{e1}{d1}
\ncline{e1}{f1}
\ncline{e1}{c1}
\ncline[linestyle=dotted,linewidth=1.3pt]{e1}{g1}
\ncline{d1}{b2}
\ncline[linestyle=dotted,linewidth=1.3pt]{d1}{g1}
\ncline{b2}{e2}
\ncline[linestyle=dotted,linewidth=1.3pt]{b2}{a1}
\ncline{b2}{c1}
\ncline[linestyle=dotted,linewidth=1.3pt]{b2}{g1}
\ncline{e2}{d2}
\ncline[linestyle=dotted,linewidth=1.3pt]{e2}{a1}
\ncline[linestyle=dotted,linewidth=1.3pt]{d2}{a1}
\ncline{d2}{f1}
\ncline{d2}{c1}
\ncline[linestyle=dotted,linewidth=1.3pt]{a1}{c1}
\ncline{f1}{c1}
\ncline[linestyle=dotted,linewidth=1.3pt]{c1}{g1}
\rput[b](0,-35){\small $P^5$}
\end{pspicture}
\hfill\ 

\hfill
\psset{unit=.0045\textwidth}
\begin{pspicture}(-33,-36)(33,36)
\cnode*(30,0){2pt}{f1}
\cnode*(15,26){2pt}{e1}
\cnode*(-15,26){2pt}{d1}
\cnode*(-30,0){2pt}{f2}
\cnode*(-15,-26){2pt}{e2}
\cnode*(15,-26){2pt}{d2}
\cnode*(-5,-9){2pt}{a1}
\cnode*(-5,9){2pt}{b1}
\cnode(10,0){2pt}{c1}
\ncline{f1}{e1}
\ncline{f1}{d2}
\ncline[linestyle=dotted,linewidth=1.3pt]{f1}{c1}
\ncline{e1}{d1}
\ncline{e1}{b1}
\ncline[linestyle=dotted,linewidth=1.3pt]{e1}{c1}
\ncline{d1}{f2}
\ncline{d1}{b1}
\ncline{f2}{e2}
\ncline{f2}{a1}
\ncline{f2}{b1}
\ncline{e2}{d2}
\ncline{e2}{a1}
\ncline{d2}{a1}
\ncline[linestyle=dotted,linewidth=1.3pt]{d2}{c1}
\ncline{a1}{b1}
\ncline[linestyle=dotted,linewidth=1.3pt]{a1}{c1}
\ncline[linestyle=dotted,linewidth=1.3pt]{b1}{c1}
\rput[b](0,-35){\small $P^6$}
\end{pspicture}
\hfill
\psset{unit=.0045\textwidth}
\begin{pspicture}(-33,-36)(33,36)
\cnode*(30,0){2pt}{b1}
\cnode*(15,26){2pt}{e1}
\cnode*(-15,26){2pt}{d1}
\cnode*(-30,0){2pt}{b2}
\cnode*(-15,-26){2pt}{e2}
\cnode*(15,-26){2pt}{d2}
\cnode(-8,-13){2pt}{a1}
\cnode*(15,0){2pt}{f1}
\cnode*(0,0){2pt}{c1}
\cnode*(-8,13){2pt}{g1}
\ncline{b1}{e1}
\ncline{b1}{d2}
\ncline{b1}{f1}
\ncline{e1}{d1}
\ncline{e1}{f1}
\ncline{e1}{c1}
\ncline{e1}{g1}
\ncline{d1}{b2}
\ncline{d1}{g1}
\ncline{b2}{e2}
\ncline[linestyle=dotted,linewidth=1.3pt]{b2}{a1}
\ncline{b2}{c1}
\ncline{b2}{g1}
\ncline{e2}{d2}
\ncline[linestyle=dotted,linewidth=1.3pt]{e2}{a1}
\ncline[linestyle=dotted,linewidth=1.3pt]{d2}{a1}
\ncline{d2}{f1}
\ncline{d2}{c1}
\ncline[linestyle=dotted,linewidth=1.3pt]{a1}{c1}
\ncline{f1}{c1}
\ncline{c1}{g1}
\rput[b](0,-35){\small $P^7$}
\end{pspicture}
\hfill\ 
\caption{Irreducible maps with large faces on the Projective Plane}
\label{mapsproj}
\end{figure}
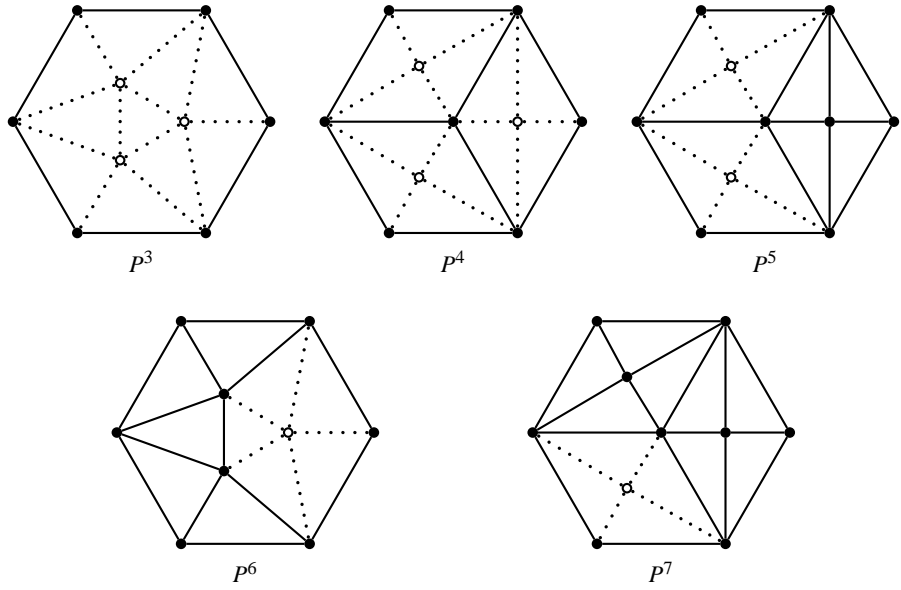

\begin{figure}[H]
  \small
  \centering
  \begin{minipage}[t]{0.3\textwidth}
    \input{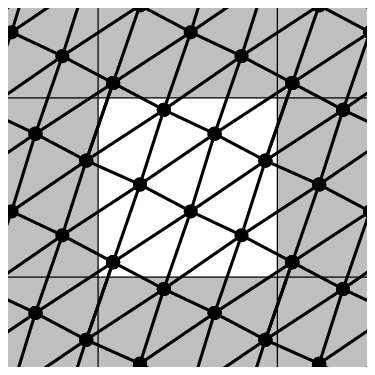}
  \end{minipage}%
  \hfill
  \begin{minipage}[t]{0.3\textwidth}
    \input{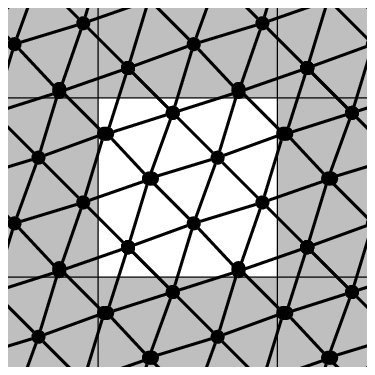}
  \end{minipage}%
  \hfill
  \begin{minipage}[t]{0.3\textwidth}
    \input{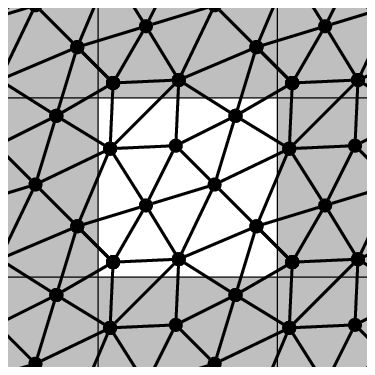}
  \end{minipage}%
  
  \bigskip
  
  \begin{minipage}[t]{0.3\textwidth}
    \input{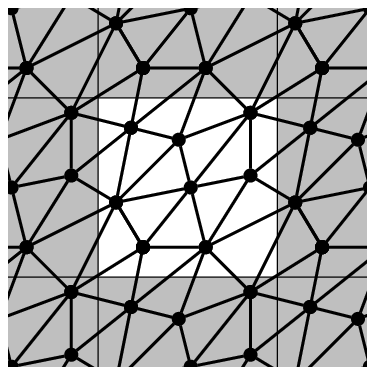}
  \end{minipage}%
  \hfill
  \begin{minipage}[t]{0.3\textwidth}
    \input{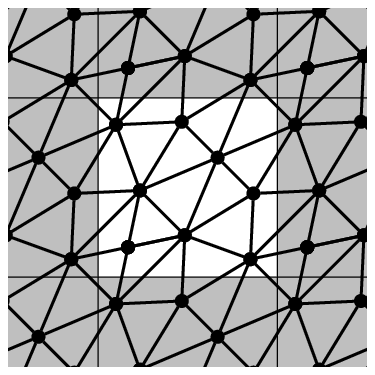}
  \end{minipage}%
  \hfill
  \begin{minipage}[t]{0.3\textwidth}
    \input{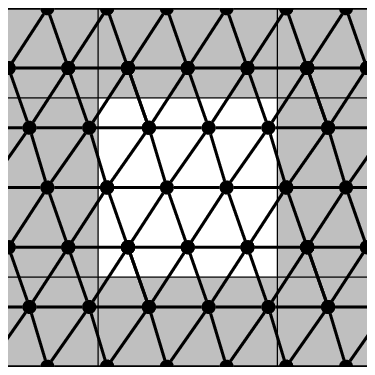}
  \end{minipage}%
  
  \bigskip
  
  \begin{minipage}[t]{0.3\textwidth}
    \input{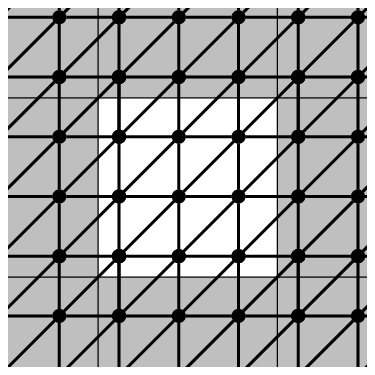}
  \end{minipage}%
  \hfill
  \begin{minipage}[t]{0.3\textwidth}
    \input{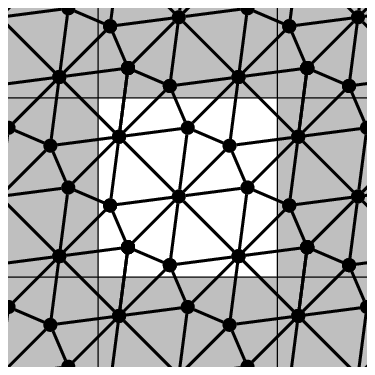}
  \end{minipage}%
  \hfill
  \begin{minipage}[t]{0.3\textwidth}
    \input{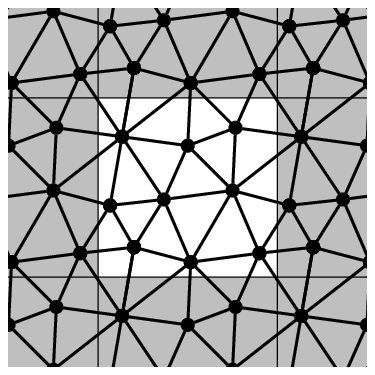}
  \end{minipage}%
  
  \bigskip
  
  \begin{minipage}[t]{0.3\textwidth}
    \input{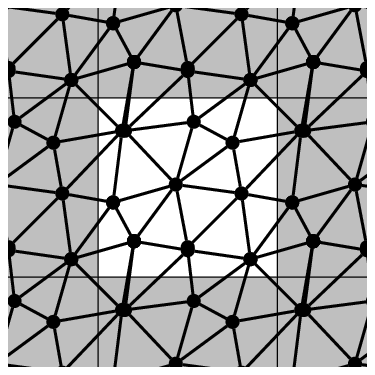}
  \end{minipage}%
  \hfill
  \begin{minipage}[t]{0.3\textwidth}
    \input{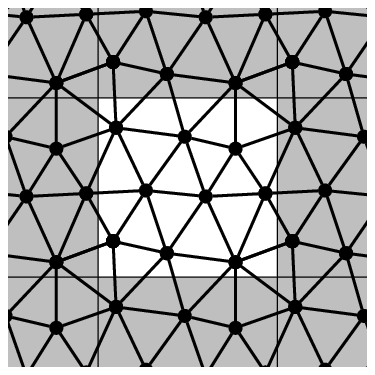}
  \end{minipage}%
  \hfill
  \begin{minipage}[t]{0.3\textwidth}
    \input{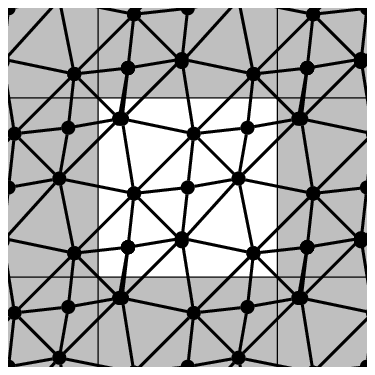}
  \end{minipage}%

  \bigskip
  
  \caption{Irreducible triangulations of the torus, $T^1$--$T^{12}$}
  \label{irrtorus1}
\end{figure}

\newpage

\begin{figure}[H]
  \small
  \centering
  \begin{minipage}[t]{0.3\textwidth}
    \input{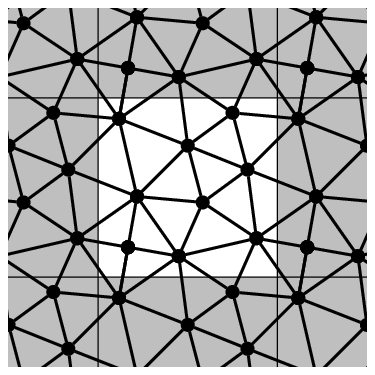}
  \end{minipage}%
  \hfill
  \begin{minipage}[t]{0.3\textwidth}
    \input{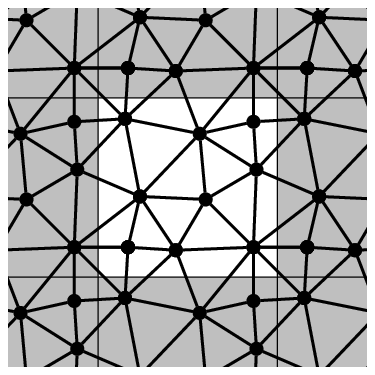}
  \end{minipage}%
  \hfill
  \begin{minipage}[t]{0.3\textwidth}
    \input{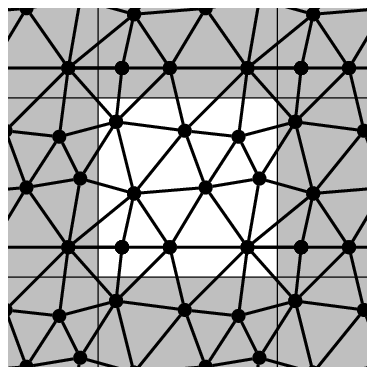}
  \end{minipage}%
  
  \bigskip
  
  \begin{minipage}[t]{0.3\textwidth}
    \input{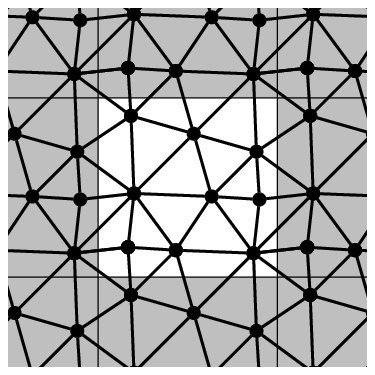}
  \end{minipage}%
  \hfill
  \begin{minipage}[t]{0.3\textwidth}
    \input{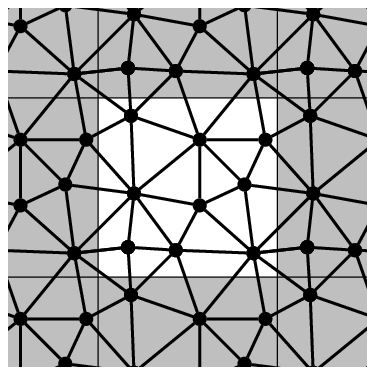}
  \end{minipage}%
  \hfill
  \begin{minipage}[t]{0.3\textwidth}
    \input{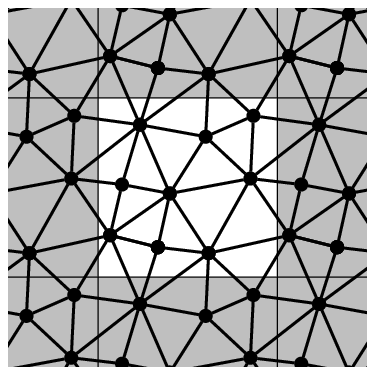}
  \end{minipage}%
  
  \bigskip
  
  \begin{minipage}[t]{0.3\textwidth}
    \input{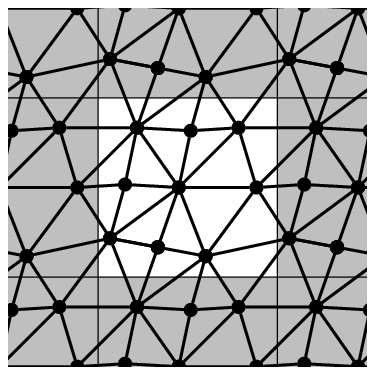}
  \end{minipage}%
  \hfill
  \begin{minipage}[t]{0.3\textwidth}
    \input{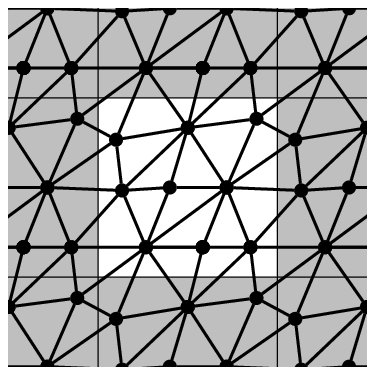}
  \end{minipage}%
  \hfill
  \begin{minipage}[t]{0.3\textwidth}
    \input{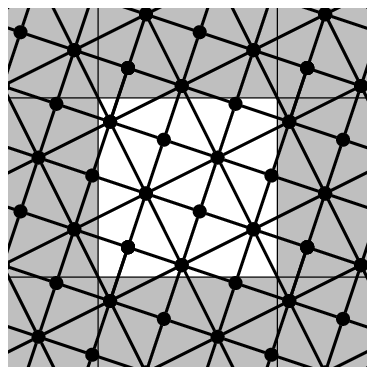}
  \end{minipage}%
  
  \bigskip
  
  \caption{Irreducible triangulations of the torus, $T^{13}$--$T^{21}$}
  \label{irrtorus2}
\end{figure}

\newpage

\begin{figure}[H]
  \small
  \centering
  \begin{minipage}[t]{0.3\textwidth}
    \input{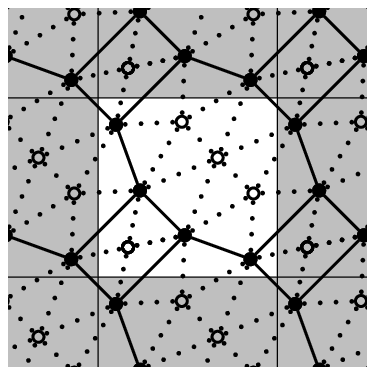}
  \end{minipage}%
  \hfill
  \begin{minipage}[t]{0.3\textwidth}
    \input{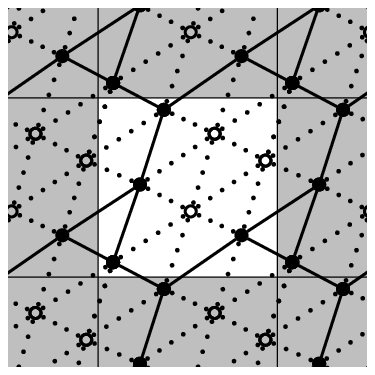}
  \end{minipage}%
  \hfill
  \begin{minipage}[t]{0.3\textwidth}
    \input{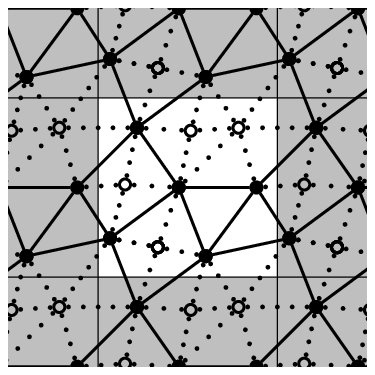}
  \end{minipage}%
  
  \bigskip
  
  \begin{minipage}[t]{0.3\textwidth}
    \input{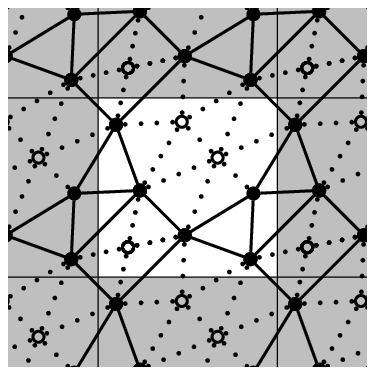}
  \end{minipage}%
  \hfill
  \begin{minipage}[t]{0.3\textwidth}
    \input{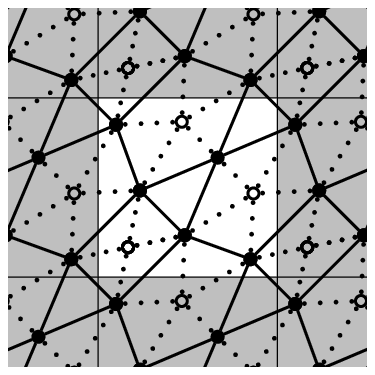}
  \end{minipage}%
  \hfill
  \begin{minipage}[t]{0.3\textwidth}
    \input{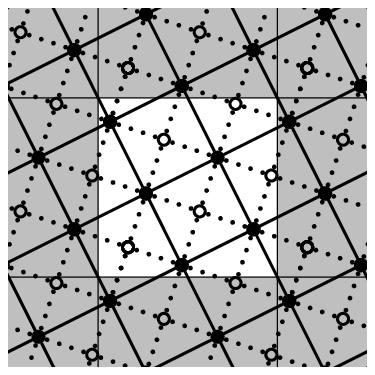}
  \end{minipage}%
  
  \bigskip
  
  \begin{minipage}[t]{0.3\textwidth}
    \input{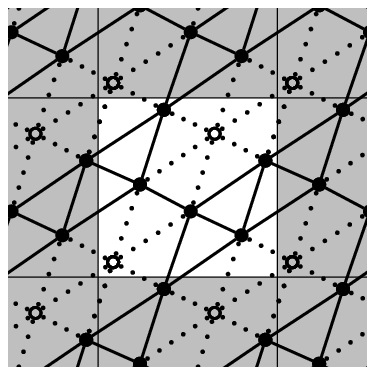}
  \end{minipage}%
  \hfill
  \begin{minipage}[t]{0.3\textwidth}
    \input{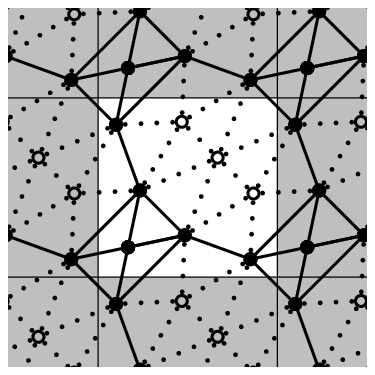}
  \end{minipage}%
  \hfill
  \begin{minipage}[t]{0.3\textwidth}
    \input{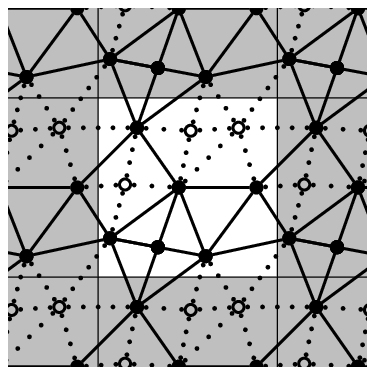}
  \end{minipage}%
  
  \bigskip
  
  \begin{minipage}[t]{0.3\textwidth}
    \input{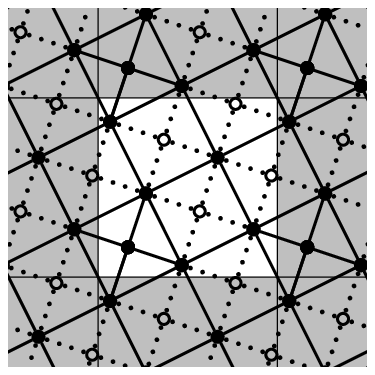}
  \end{minipage}%
  \hfill
  \begin{minipage}[t]{0.3\textwidth}
    \input{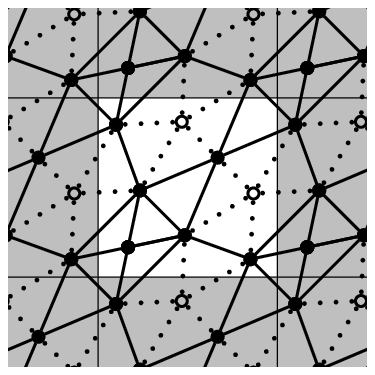}
  \end{minipage}%
  \hfill
  \begin{minipage}[t]{0.3\textwidth}
    \input{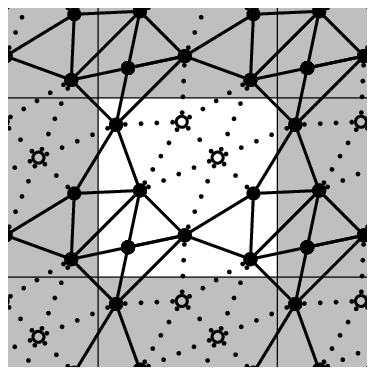}
  \end{minipage}%

  \bigskip
  
  \caption{Irreducible triangulations with large faces on the torus, $T^{22}$--$T^{33}$}
  \label{mapstorus1}
\end{figure}

\newpage

\begin{figure}[H]
  \begin{minipage}[t]{0.3\textwidth}
    \input{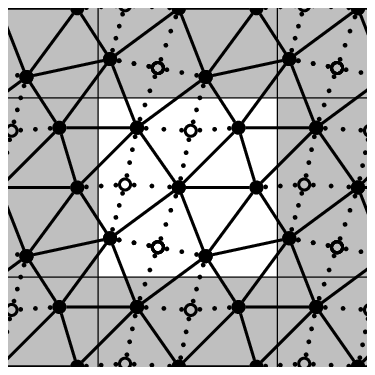}
  \end{minipage}%
  \hfill
  \begin{minipage}[t]{0.3\textwidth}
    \input{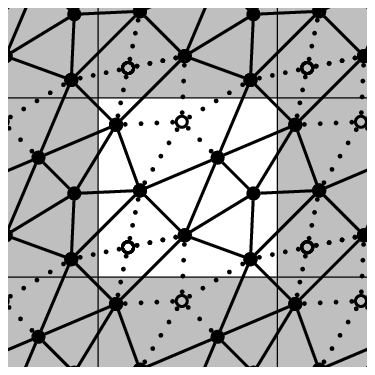}
  \end{minipage}%
  \hfill
  \begin{minipage}[t]{0.3\textwidth}
    \input{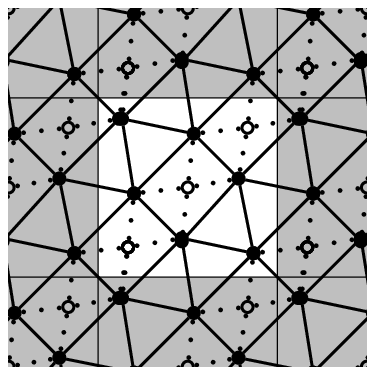}
  \end{minipage}%
  
  \bigskip
  
  \begin{minipage}[t]{0.3\textwidth}
    \input{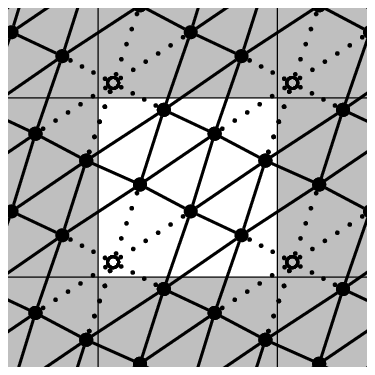}
  \end{minipage}%
  \hfill
  \begin{minipage}[t]{0.3\textwidth}
    \input{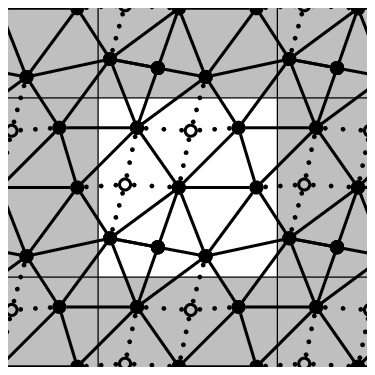}
  \end{minipage}%
  \hfill
  \begin{minipage}[t]{0.3\textwidth}
    \input{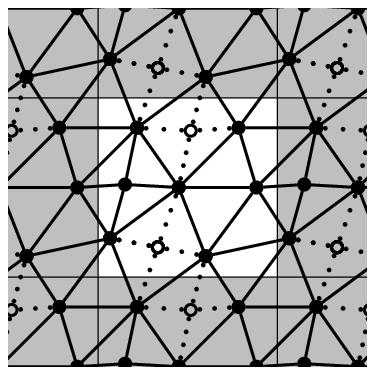}
  \end{minipage}%
  
  \bigskip
  
  \begin{minipage}[t]{0.3\textwidth}
    \input{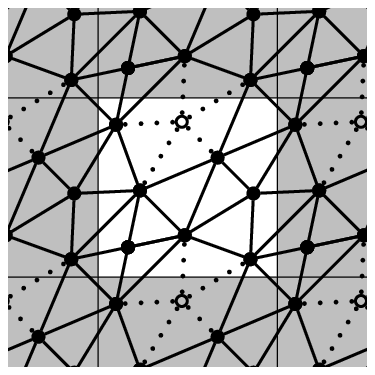}
  \end{minipage}%
  \hfill
  \begin{minipage}[t]{0.3\textwidth}
    \input{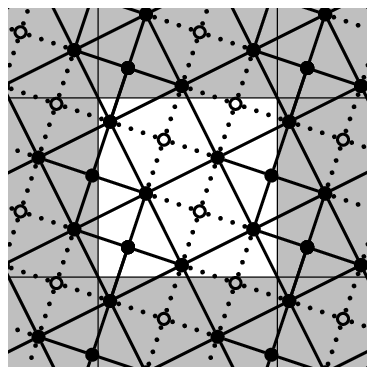}
  \end{minipage}%
  \hfill
  \begin{minipage}[t]{0.3\textwidth}
    \input{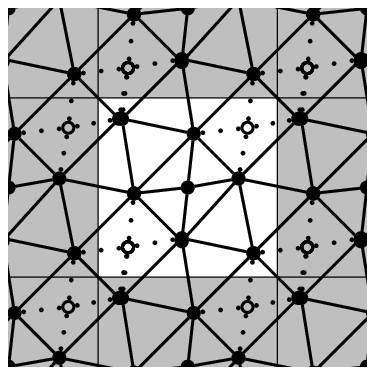}
  \end{minipage}%
  
  \bigskip
  
  \begin{minipage}[t]{0.3\textwidth}
    \input{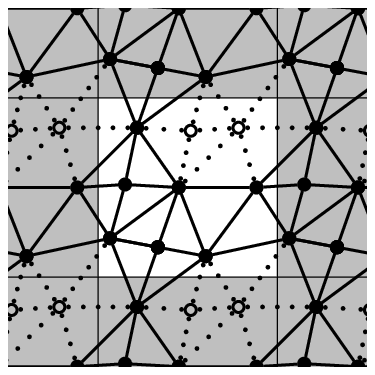}
  \end{minipage}%
  \hfill
  \begin{minipage}[t]{0.3\textwidth}
    \input{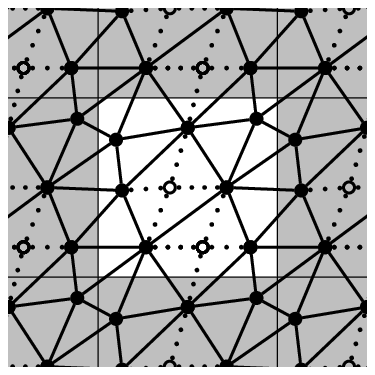}
  \end{minipage}%
  \hfill
  \begin{minipage}[t]{0.3\textwidth}
    \input{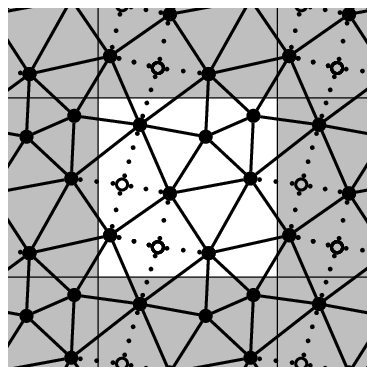}
  \end{minipage}%

  \bigskip
  
  \caption{Irreducible triangulations with large faces on the torus, $T^{34}$--$T^{45}$}
  \label{mapstorus2}
\end{figure}

\newpage

\begin{figure}[H]
  \begin{minipage}[t]{0.3\textwidth}
    \input{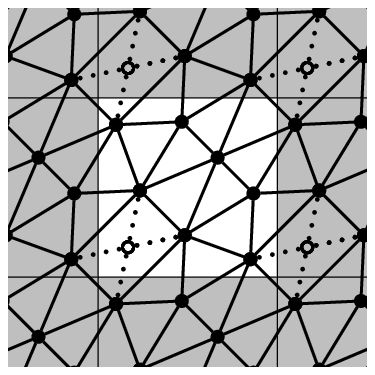}
  \end{minipage}%
  \hfill
  \begin{minipage}[t]{0.3\textwidth}
    \input{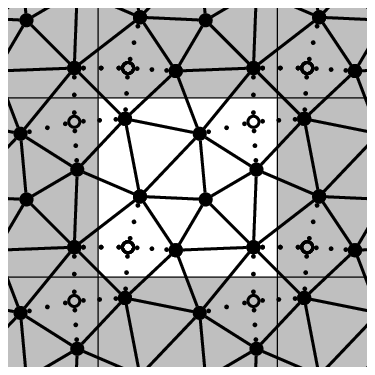}
  \end{minipage}%
  \hfill
  \begin{minipage}[t]{0.3\textwidth}
    \input{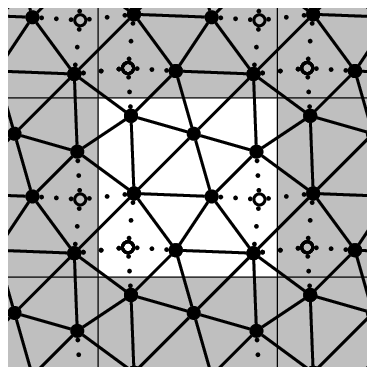}
  \end{minipage}%
  
  \bigskip
  
  \begin{minipage}[t]{0.3\textwidth}
    \input{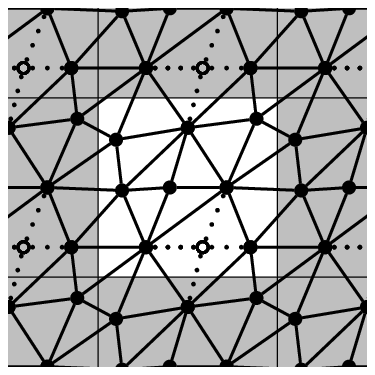}
  \end{minipage}%
  \hfill
  \begin{minipage}[t]{0.3\textwidth}
    \input{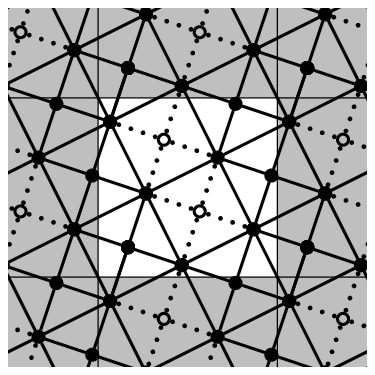}
  \end{minipage}%
  \hfill
  \begin{minipage}[t]{0.3\textwidth}
    \input{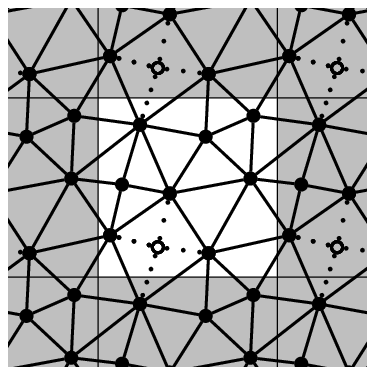}
  \end{minipage}%
  
  \bigskip
  
  \begin{minipage}[t]{0.3\textwidth}
    \input{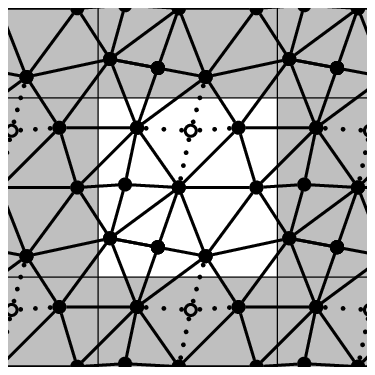}
  \end{minipage}%
  \hfill
  \begin{minipage}[t]{0.3\textwidth}
    \input{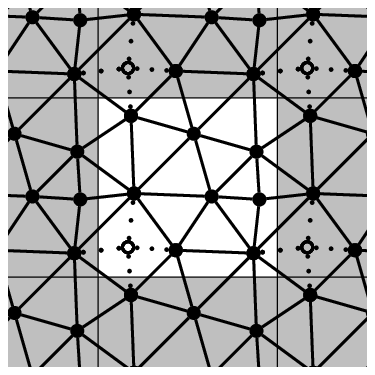}
  \end{minipage}%
  \hfill
  \begin{minipage}[t]{0.3\textwidth}
    \input{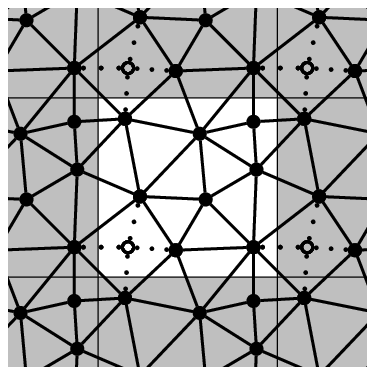}
  \end{minipage}%
  
  \bigskip
  
  \begin{minipage}[t]{0.3\textwidth}
    \input{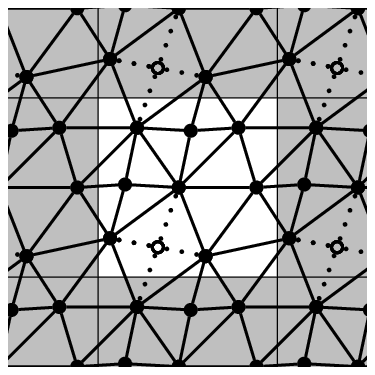}
  \end{minipage}%
  \hfill
  \begin{minipage}[t]{0.3\textwidth}
    \input{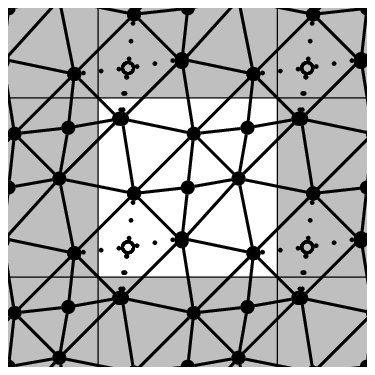}
  \end{minipage}%
  \hfill
  \begin{minipage}[t]{0.3\textwidth}
    \input{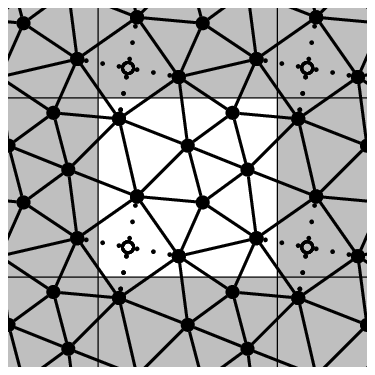}
  \end{minipage}%

  \bigskip
  
  \caption{Irreducible triangulations with large faces on the torus, $T^{46}$--$T^{57}$}
  \label{mapstorus3}
\end{figure}

\newpage

\begin{figure}[H]
  \begin{minipage}[t]{0.3\textwidth}
    \input{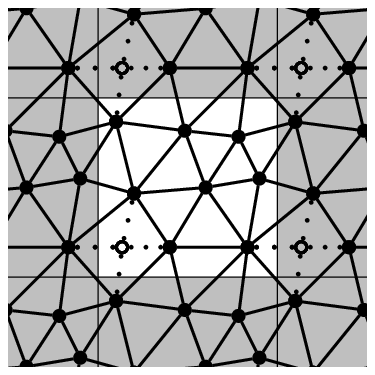}
  \end{minipage}%
  \hfill
  \begin{minipage}[t]{0.3\textwidth}
    \input{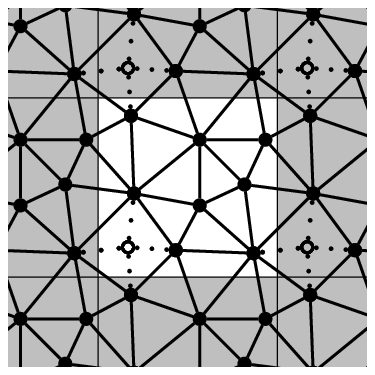}
  \end{minipage}%
  \hfill
  \begin{minipage}[t]{0.3\textwidth}
    \input{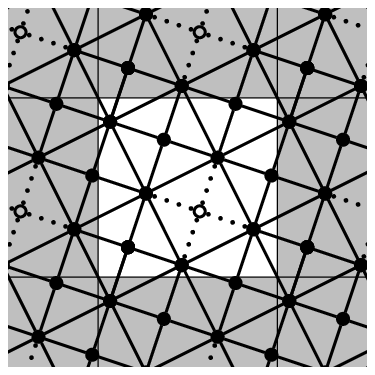}
  \end{minipage}%
  
  \bigskip
  
  \begin{minipage}[t]{0.3\textwidth}
    \input{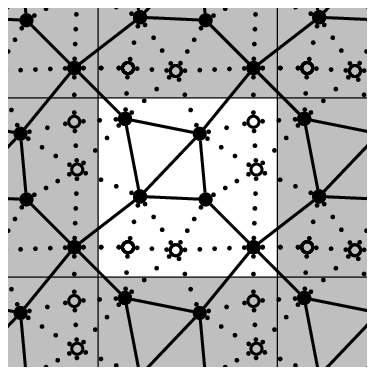}
  \end{minipage}%
  \hfill
  \begin{minipage}[t]{0.3\textwidth}
    \input{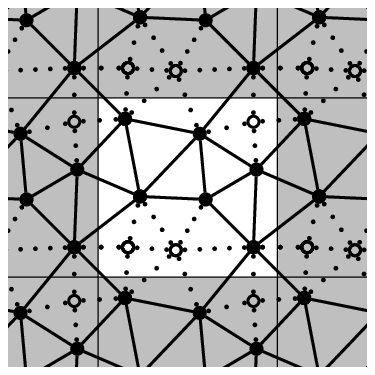}
  \end{minipage}%
  \hfill
  \begin{minipage}[t]{0.3\textwidth}
    \input{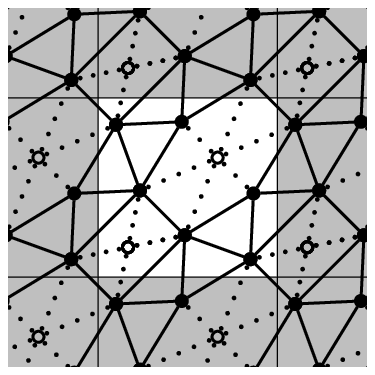}
  \end{minipage}%
  
  \bigskip
  
  \begin{minipage}[t]{0.3\textwidth}
    \input{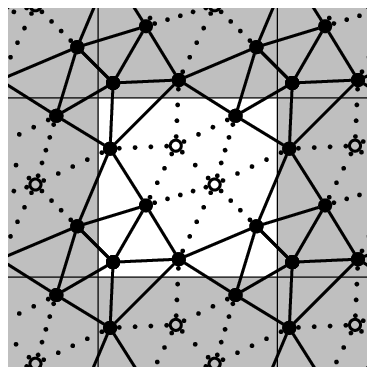}
  \end{minipage}%
  \hfill
  \begin{minipage}[t]{0.3\textwidth}
    \input{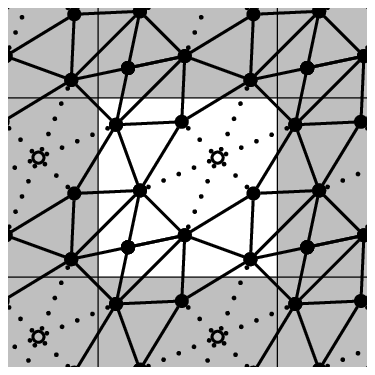}
  \end{minipage}%
  \hfill
  \begin{minipage}[t]{0.3\textwidth}
    \input{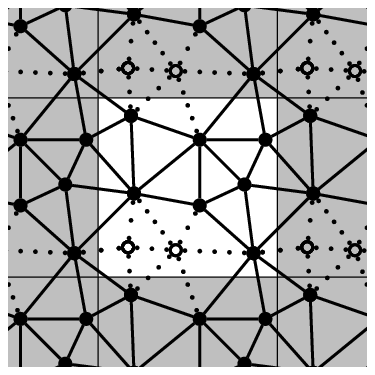}
  \end{minipage}%
  
  \bigskip
  
  \begin{minipage}[t]{0.3\textwidth}
    \input{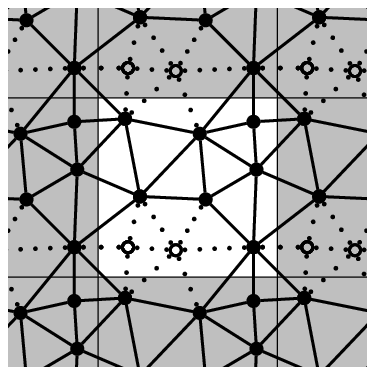}
  \end{minipage}%
  \hfill
  \begin{minipage}[t]{0.3\textwidth}
    \input{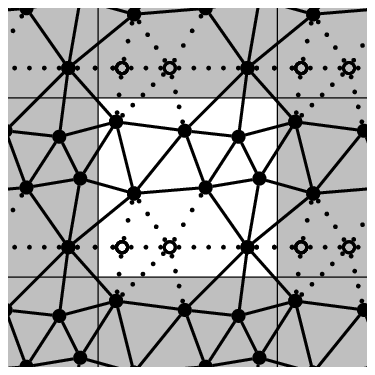}
  \end{minipage}%

  \bigskip
  
  \caption{Irreducible triangulations with large faces on the torus, $T^{58}$--$T^{68}$}
  \label{mapstorus4}
\end{figure}

\begin{landscape}
\begin{table}
\caption{Counts by number of vertices for maps on the projective plane}
\begin{tabular}{r|r r r|r r r|r r r}
   &\multicolumn{3}{c|}{Irreducible maps}&\multicolumn{3}{c|}{Face irreducible maps}&\multicolumn{3}{c}{Maps}\\
   &     & closed & open   &            &               &             &            &               &             \\
vertices & tri. & $2$-cell & $2$-cell & triangular & closed $2$-cell & open $2$-cell & polyhedral & closed $2$-cell & open $2$-cell \\
\hline
3  &  0 &  0 &  1 & 0            & 0             & 1             & 0             & 0             & 1  \\
4  &  0 &  1 &  1 & 0            & 1             & 2             & 0             & 1             & 6  \\
5  &  0 &  2 &  2 & 0            & 2             & 4             & 0             & 9             & 65 \\
6  &  1 &  2 &  2 & 1            & 5             & 11            & 1             & 188           & 1128 \\
7  &  1 &  1 &  1 & 3            & 18            & 42            & 10            & 4850          & 27041 \\
8  &  0 &  0 &  0 & 16           & 90            & 204           & 247           & 141255        & 741730 \\
9  &  0 &  0 &  0 & 134          & 566           & 1185          & 8576          & 4138394       & 21858099 \\
10 &  0 &  0 &  0 & 1210         & 4059          & 7768          & 263539        & 119621784     & 663735117 \\
11 &  0 &  0 &  0 & 11719        & 31773         & 55832         & 7290012       & 3413905527    & 20513612018 \\
12 &  0 &  0 &  0 & 114478       & 261912        & 427171        & 185392686     & 96571642059   & 640173121863 \\
13 &  0 &  0 &  0 & 1108826      & 2222281       & 3398145       & 4448447624    & 2716506356624 & -  \\
14 &  0 &  0 &  0 & 10606795     & 19187942      & 27792593      & 102469338545  & -             & -  \\
15 &  0 &  0 &  0 & 100352404    & 167528480     & 231869302     & 2292650424419 & -             & -  \\
16 &  0 &  0 &  0 & 940956644    & 1473754923    & 1963615254    & -             & -             & -  \\
17 &  0 &  0 &  0 & 8762227629   & 13035546948   & 16822695958   & -             & -             & -  \\
18 &  0 &  0 &  0 & 81168427279  & 115780306285  & 145453671164  & -             & -             & -  \\
19 &  0 &  0 &  0 & 748953936818 & 1031742846949 & 1267008314524 & -             & -             & -  \\
\end{tabular}
\label{MapcountsProj}
\end{table}
\end{landscape}

\begin{landscape}
\begin{table}
\caption{Counts by number of vertices for maps on the torus}
\begin{tabular}{r|r r r|r r r|r r r}
   &\multicolumn{3}{c|}{Irreducible maps}&\multicolumn{3}{c|}{Face irreducible maps}&\multicolumn{3}{c}{Maps}\\
   &     & closed & open   &            &               &             &            &               &             \\
vertices & tri. & $2$-cell & $2$-cell & triangular & closed $2$-cell & open $2$-cell & polyhedral & closed $2$-cell & open $2$-cell \\
\hline
4  & 0  & 0  & 2  & 0            & 0            & 2             & 0             & 0             & 3 \\
5  & 0  & 2  & 7  & 0            & 2            & 8             & 0             & 3             & 70 \\
6  & 0  & 7  & 11 & 0            & 8            & 25            & 0             & 205           & 2656 \\
7  & 1  & 12 & 16 & 1            & 39           & 124           & 1             & 15958         & 126466 \\
8  & 4  & 15 & 15 & 7            & 290          & 789           & 33            & 1014018       & 6070817 \\
9  & 15 & 16 & 16 & 112          & 2584         & 5976          & 4713          & 52587939      & 280232378 \\
10 & 1  & 1  & 1 & 2109         & 25202        & 49677         & 442429        & 2376996732    & 12389481487 \\
11 & 0  & 0  & 0  & 37867        & 258518       & 443088        & 28635972      & 97845502685   & 527699182180 \\
12 & 0  & 0  & 0  & 605496       & 2719039      & 4145672       & 1417423218    & 3770598166962 & - \\
13 & 0  & 0  & 0  & 8778329      & 28922902     & 40158242      & 58321972887   & -             & - \\
14 & 0  & 0  & 0  & 117839254    & 308507103    & 398230859     & 2102831216406 & -             & - \\
15 & 0  & 0  & 0  & 1491505713   & 3283573624   & 4008448713    & -             & -             & - \\
16 & 0  & 0  & 0  & 18035839188  & 34773632099  & 40687327452   & -             & -             & - \\
17 & 0  & 0  & 0  & 210391127053 & 365879925813 & 414537434014  & -             & -             & - \\
\end{tabular}
\label{MapcountsTorus}
\end{table}
\end{landscape}

\newpage


\bibliographystyle{amsplain}

\providecommand{\bysame}{\leavevmode\hbox to3em{\hrulefill}\thinspace}
\providecommand{\MR}{\relax\ifhmode\unskip\space\fi MR }
\providecommand{\MRhref}[2]{%
  \href{http://www.ams.org/mathscinet-getitem?mr=#1}{#2}
}
\providecommand{\href}[2]{#2}

\end{document}